\newenvironment{proof}{\noindent\bf Proof. \rm}{\hfill $\mbox{\boldmath{$ \square$}}$}
\newtheorem{coro}{\bf Corollary}[section]
\newtheorem{theo}[coro]{\bf Theorem}
\newtheorem{defi}[coro]{\bf Definition}
\newtheorem{lem}[coro]{\bf Lemma}
\newtheorem{lemma}[coro]{\bf Lemma}
\def\S{\mathbf{S}}
\def\P{\mathcal{P}}
\def\F{\mathcal{F}}
\title{ \large \textbf{ Leibniz's law and its paraconsistent models}}
\author{Aldo Figallo-Orellano  }
\date{\small Centre for Logic, Epistemology and the History of
Science (CLE),\\  University of Campinas (Unicamp),Brazil\\
{\small E-mail:~{aldofigallo@gmail.com}}\\[1mm]}
\begin{document}
\maketitle
\begin{abstract}
This paper  aims at discussing the importance of Leibniz Law to getting models for Paraconsistent Set Theories.
 
\end{abstract}
\tableofcontents

\section{Introduction}

Paraconsistency is the study of logic systems having a negation $\neg$ which is not explosive; that is, there exist formulas $\alpha$ and $\beta$ in the language of the logic such that $\beta$  is not derivable from the contradictory set $\{\alpha,\neg\alpha\}$. In other words, the logic has contradictory, but non-trivial theories. There are several approaches to paraconsistency in the literature since the introduction of Jaskowski's system of Discursive logic such as Relevant logics, Adaptive logics, Many-valued logics, and many others in 1948. The well-known 3-valued logic  of Paradox (LP) was introduced by  Priest  with the aim of formalizing the philosophical perspective underlying  Priest and  Sylvan's Dialetheism. As it is well-known, the main thesis behind Dialetheism is that there are true contradictions, that is, that some sentences can be both true and false at the same time and in the same way. The logic LP has been intensively studied and developed by several authors proposing, particularly, extensions to first-order languages and applications to Set Theory.

The 1963 publication  of  da Costa's Habilitation thesis “Sistemas Formais Inconsistentes” constitutes a landmark in the history of paraconsistency. In that thesis, da Costa introduced the hierarchy $C_n$ (for $n\leq 1$)  and $C_\omega$  of C-systems, \cite{dC}.

Recall that  $C_\omega$ is defined over  the signature $\Sigma=\{\to,\wedge,\vee,\neg\}$ and  the language  ${\cal L}_\Sigma$  determined by the  Hilbert calculus from axiom schemas from {\em Intuitionistic Positive Calculus}, the rule {\em modus ponens} and the following axiom schemata: ($C\omega$ 1) $\alpha\vee\neg\alpha$ and ($C\omega$ 2) $\neg\neg\alpha\to\alpha$.

In 1969, da Costa visited  {\em Universidad Nacional del Sur} and suggested  finding  a semantics for $C_n$ and $C_\omega$ to Fidel.  In that time, they knew that the negation $\neg$ was not congruencial. In fact, as we proved in \cite{F1},  $C_w$ is not algebrizable with Blok-Pigozzi's method. Fidel overcame this difficulty by means of a presentation of a novel algebraic-relational class of structures called F-structure by adapting  Lindenbaum-Tarski method in order to  prove completeness theorems. The $F$-structure are pairs  $\langle A, \{N_x\}_{x\in A} \rangle$ where $A$ is a generalized Heyting algebra and  $N_x$   is a set of all possible negation of $x\in A$.  The algebraic part  of the structures captures the algebrizable fragment of the system, that is to say, the negation-free fragment. 

In our paper \cite{FOS2}, we apply Fidel's method in order to prove an adequacy theorem, in the strong version  for $C_\omega$ and we also present models for first-order $C_\omega$ (Q$C_\omega$) logic by adapting our work developed in \cite{FOS1}.

The paper is organized as follow: In the section 2, we do a brief review state of art of the classical Set Theory and  non-classical Set Theories in the setting of the existences of their model. In section 3, first we do a 
summary of the known Paraconsistent Set Theories and we discuss the importance of Leibniz Law to obtain models for da Costa's Paraconsistent Set theory. Later on, we analyze the minimal conditions  for a models constructed over a Heyting algebras that we need to prove that several Zermelo-Fraenkel's set-theoretic axioms are valid in a suitable algebraic-like models and finally, we present models for Paraconsistent Nelon's Set Theory, all this is part of the section 4 and 5.
 
\section{ Non-classical Set Theory and their models} \label{section2}
In this section we review the non-classical set theories in the literature. First, recall that Boolean-valued models of set theory were introduced by Scott, Solovay and Vopĕnka in 1965; this theory can be found in Bell (2005), see \cite{Bell}. 

Next, we will make a synthesis of  the construction of Bell's book  for Zermelo-Frankeal classical Set Theory, summarizing the fundamental concepts.

We fix a model of set theory V and an Booelan algebra ${\bf A} = (A, \wedge, \vee, 0, 1, \to)$ and construct a universe of names by transfinite recursion: 

$$ {\mathbf{V}_\xi}^{{\bf A}}=\{x: x\, \textrm{\rm  a function and }\, ran(x)\subseteq A \, \,\textrm{\rm and }\,  dom(x)\subseteq \mathbf{V}_\zeta^{{\bf A}}\, \textrm{\rm for some}\,  \zeta< \xi \} $$

and 

$$ {\mathbf{V}}^{{\bf A}}=\{x: x\in {\mathbf{V}_\xi}^{{\bf A}}\, \textrm{\rm  for some } \xi \} $$

The class ${\mathbf{V}}^{{\bf A}}$  is called the Boolean-valued model over ${\bf A}$. We note that this definition does not depend on the algebraic operations in $A$, but only on the set $A$, so any expansion of $A$ to a richer language will give the same class of names ${\mathbf{V}}^{{\bf A}}$. By ${\cal L}_\in$, we denote the first-order language of set theory using only the propositional connectives $\wedge$, $\vee$, $\bot$, and $\to$. We can now expand this language by adding all of the elements of ${\mathbf{V}}^{{\bf A}}$  as constants; the expanded (class-sized) language will be called ${\cal L}_A$. The (meta-)induction principle for ${\mathbf{V}}^{{\bf A}}$ can be proved by a simple induction on the rank function: for every property $\Phi$ of names, if for all $x\in {\mathbf{V}}^{{\bf A}}$, we have  

$$\forall y \in dom(x)(\Phi(y)\, \mbox{implies}\, \Phi(x)),$$
 
 then all names  $x\in {\mathbf{V}}^{{\bf A}}$ have the property $\Phi$.  We can now define a map $||\cdot||$ assigning to each negation formula in ${\cal L}_A$  a truth value in $A$ as follows.

\begin{defi}\label{str}

 For a given complete  Boolean algebra $\bf A$.  If $u, v\in {\mathbf{V}}^{{\bf A}}$ and $\varphi$ formulas, then  the mapping $||\cdot||:{\cal L}_{A}\to A$ is defined for closed formulas:

\begin{center}

$|| \bot||=0$,\\ [3mm]

$||u\in v || =\bigvee\limits_{x\in dom(v)} (v(x) \wedge ||x \approx u ||)  $ \\ [3mm]

$||u\approx v || =\bigwedge\limits_{x\in dom(u)} (u(x)) \to ||x\in v ||) \wedge \bigwedge\limits_{x\in dom(v)} (v(x) \to ||x\in u ||)$\\ [3mm]


$||\varphi \# \psi||  = ||\varphi||  \tilde{\#} ||\psi|| $, for every $\#\in \{\wedge,\vee, \to\}$,\\[3mm]



$||\exists x\varphi|| = \bigvee\limits_{{u\in \mathbf{V}^{\bf A}}} ||\varphi (u)||,$ \\[3mm]

 $||\forall x\varphi|| = \bigwedge\limits_{{u\in \mathbf{V}^{\bf A}}} ||\varphi (u)||$.

$||\varphi||$ is called the {\bf truth-value} of the sentence $\varphi$ in the language ${\cal L}_A$ in $\mathbf{V}^{\bf A}$ Boolean-valued model over ${\bf A}$.

\end{center}
\end{defi}

As usual, we abbreviate $\exists x(x \in u \wedge \Psi(x))$ by $\exists x \in u\, \Psi(x)$ and $\forall x(x\in u\, \to \Psi(x))$ by $\forall x \in u \Psi(x)$ and call these bounded quantifiers. We say that $\gamma$ is valid in $\mathbf{V}^{\bf A}$  if $||\gamma ||=1$ and write, $\mathbf{V}^{\bf A} \vDash \gamma$.
The basic system of Zermelo-Fraenkel set theory  here is called {\bf ZF} and consists of first order version  $QCL$ of Classical logic ($CL$) over the first-order signature $\Theta$ which contains an equality predicate $\approx$ and a binary predicate $\in $. The system {\bf ZF} is the first order theory with equality obtained from the logic $QCL$ over $\Theta$ by adding the following set-theoretic axiom schemas:
\begin{center}

(Extensionality) $\forall x \forall y [\forall z(z\in x \leftrightarrow z\in y)\to (z \approx y)]$ \\[3mm]

(Pairing)  $\forall x \forall y \exists w\forall z [z\in w \leftrightarrow (z \approx x \vee z \approx y) ]$ \\[3mm]

(Colletion) $\forall x [(\forall y\in x\exists z\phi(y,z))\to \exists w\forall y\in x \exists z\in w  \phi(y,z)]$\\[3mm]

(Powerset) $\forall x \exists w \forall z [z\in w \leftrightarrow \forall y\in z(y\in x)]$ \\[3mm]

(Separation) $\forall x \exists w \forall z[z\in w \leftrightarrow (z\in x \wedge \phi(z))]$ \\[3mm]

(Empty set) $\exists x\forall z[z\in x \leftrightarrow \neg (z\approx z)]$ 

The set satisfying this axiom is, by extensionality, unique and we refer to it with notation $\emptyset$.\\[3mm]

(Union) $\forall x \exists w \forall z[ z\in w \leftrightarrow  \exists y\in x(z\in y)]$ \\[3mm]

(Infinity) $\exists x [\emptyset\in x \wedge \forall y\in x (y^+\in x)]$

From union and pairing and extensionality, we can note by $y^+$ the unique set $y\cup \{y\}$. \\[3mm] 

(Induction) $\forall x [( \forall y\in x \phi(y))\to \phi(x)]\to \forall x\phi(x)$.

\end{center}

The original intuition of Boolean-valued models was that the names represent objects and that the equivalence classes of names under the equivalence relation defined by $u \sim  v$ if and only if $||u \approx v|| =1$ can serve as the ontology of the new model. In particular, this means that if two names represent the same object, they should instantiate the same properties. This is known as “indiscernibility of identicals”, one of the directions of Leibniz's Law. In our setting, we can represent this by a statement of the type 

$$||u \approx v|| \wedge ||\Psi(u)|| \leq ||\Psi(v)||.$$

Clearly, the  Boolean-valued models $\mathbf{V}^{\bf A}$  verify this Law.  So, we have the following

\begin{theo} All the axioms, hence all the theorems,  of {\bf ZF} are valid in $\mathbf{V}^{\bf A}$.  
\end{theo}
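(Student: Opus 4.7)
The plan is to verify that $||\gamma||=1$ for each axiom $\gamma$ on the list, by combining three ingredients: the recursive definition of the truth-value map in Definition~\ref{str}, the completeness of $\mathbf{A}$ (so that the suprema and infima indexed by the class $\mathbf{V}^{\bf A}$ reduce to set-sized operations once a witnessing name is produced), and the $\in$-induction principle for $\mathbf{V}^{\bf A}$. Before touching any set-theoretic axiom I would first establish the basic equality lemmas: reflexivity $||u\approx u||=1$ (by induction on the rank of $u$, using the auxiliary fact $u(x)\leq ||x\in u||$ for $x\in dom(u)$), symmetry which is built into the symmetric shape of the definition, the Leibniz substitution property $||u\approx v||\wedge ||\Psi(u)||\leq ||\Psi(v)||$ already highlighted in the discussion, and transitivity of $\approx$. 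These are proved together by a simultaneous induction on the rank of the names involved.

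With these equality tools in hand, most ZF axioms reduce to exhibiting an appropriate witnessing name. Extensionality is a direct unfolding of $||u\approx v||$ against the definition of $||z\in u||$. For Pairing I would take $w$ with $dom(w)=\{u,v\}$ and $w(u)=w(v)=1$. For Separation the name $\{z\in u:\phi(z)\}$ has $dom(u)$ as domain and values $u(x)\wedge ||\phi(x)||$; for Union the domain is $\bigcup_{y\in dom(u)}dom(y)$ with the obvious values; for Powerset the domain is the function space $A^{dom(u)}$, whose elements $f$ receive the value encoding "$f$ represents a subset of $u$". Empty set and Infinity are realized via the canonical names $\check{\emptyset}$ and $\check{\omega}$ built by transfinite recursion inside $V$.

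The two genuinely subtle axioms are Collection and $\in$-Induction. For Collection, the class of potential $z$-witnesses is proper in $\mathbf{V}^{\bf A}$, but Replacement together with the Reflection Principle applied in the ambient $V$ produces an ordinal $\zeta$ such that $\mathbf{V}^{\bf A}_\zeta$ already contains enough witnesses; the name $w$ with $dom(w)=\mathbf{V}^{\bf A}_\zeta$ and values identically $1$ then makes $||\gamma||=1$. The $\in$-Induction scheme is a direct application of the $\mathbf{V}^{\bf A}$-induction principle stated just before Definition~\ref{str}: assuming the premise has value $1$, a rank induction on $u$ yields $||\phi(u)||=1$ for every $u\in\mathbf{V}^{\bf A}$, whence $||\forall x\,\phi(x)||=1$.

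The main obstacle I expect is the simultaneous induction establishing the Leibniz substitution property together with transitivity of $\approx$: the inductive step forces one to unfold the nested recursive definitions of $||u\approx v||$ and $||x\in v||$ and to exploit the complete distributivity of $\mathbf{A}$ several times, while carefully tracking which rank strictly decreases at each recursive call. Once that foundation is in place, the remaining verifications are essentially formal manipulations of suprema and infima, and the only other delicate point is the appeal to Reflection in the Collection argument, which genuinely uses that the meta-theory $V$ itself satisfies \textbf{ZF}.
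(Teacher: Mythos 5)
Your plan is the standard Bell-style argument, and it is essentially the approach the paper relies on: the paper offers no proof of this theorem, deferring to the classical Boolean-valued construction after stressing that $\mathbf{V}^{\bf A}$ verifies Leibniz's law, and the witnesses you pick (the pair name with value $1$, the Separation name with values $u(x)\wedge ||\phi(x)||$, the union name, the function-space name $A^{dom(u)}$ for Powerset, $\check{\omega}$ for Infinity, and a set-sized stage $\mathbf{V}^{\bf A}_\zeta$ for Collection) are exactly those the paper uses in its later verifications in the more general Heyting setting, your appeal to Reflection for Collection being a harmless variant of the paper's choice of ordinals $\alpha_x$ via AC. The one point to tighten is the Induction scheme: validating it requires the inequality $||\forall x[(\forall y\in x\,\phi(y))\to\phi(x)]|| \leq ||\phi(u)||$ for every $u$, proved by rank induction (using that the left side is below both $||\forall y\in u\,\phi(y)||$ and its implication to $||\phi(u)||$), not merely the special case in which the premise is assumed to have value $1$.
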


Now, replacing the Boolean algebra by a Heyting algebra, one obtains a Heyting-valued model. The proofs of the Boolean case transfer to the Heyting-valued, where the logic of the Heyting algebra determines the logic of the Heyting-valued model of set theory. This idea was further generalized by Takeuti \& Titani (1992); Titani (1999); Titani \& Kozawa (2003); and Ozawa (2017), replacing the Heyting algebra by appropriate lattices that allow models of quantum set theory or fuzzy set theory. After this, L\"owe and Tarafder proposed a class of reasonable implication algebra in order to construct algebraic-valued models that validate all axioms of the negation-free fragment of Zermelo-Fraenkel set theory, \cite{O-17, TK-03, T-99, TT-92}. From now on, we shall call this Set Theories as non-classical set Theories.

It is important to note that there are different kinds of models for the above non-classical Set Theories, see for instance, \cite{Fou-80, Fit-69, Bell-88}.

Zermelo-Fraenkel type set theories with models are the based on Intuitionistic, Fuzzy, Quantum logics and the family of set theories based on intermediate logics between the classical logics and the mentioned logics. Such non-classical set theories are particularly based on algebraizable logics and the models can be constructed over algebras that are semantics of such logics. 

Other models, such as the ones constructed over sheaf, topoi, possible world Kripke's semantics, or topological spaces are possible to be constructed due to the algebrizebility of the associated logics. While it is possible that these models need very sophisticated technical work, the existence of such models is strongly based on the fact that these set theories already have models of different nature.

Zermelo-Fraenkel's axioms are valid in  models of these set theories; that is to say, the set theories are sound from the logics  point of view. Even though we have not seen a proof the correctness, we think it is not possible to  give it due to a consequence of G\"odel’s incompletness theorem.

In general, the models of logic systems live in what philosophers call Meta-Mathematics and it is here where all mathematics mainly live; namely, analysis, algebra, topology and  whichever branch we know. This is exactly where G\"odel's proof lives. 

Although we can formalize new set theories, the models are construted over the set of recursive fucntions and the ordinal numbers of Meta-Mathematics. Thus, we think that G\"odel’s proof is possible to be given to each set theory; hence, these set theories are incomplete from this point of view.

Recall that G\"odel's proof of the famous incompletness theorem proved the indecibility of certain formulas of  Russell and Whitehead’s {\em Principia Mathematica}. Specifically, G\"odel  proves that there are properties of natural numbers that are true, but they can not be formally proved in the logical system. Russell and Whitehead's logical  system of Principia Mathematica was elaborated with the intention to make Hilbert's dream a reality. That is to say, to find a logical system where each mathematical theorem (of the Meta-Mathematics)  has a logical theorem that reflects it.

G\"odel’s proof shows the impossibility of that dream in the most simple mathematics, i.e. that of the natural numbers where certain recursive functions  are possible to be defined. Now is the right moment to ask ourselves: What did Hilbert look for? A model for all mathematics? The logic provides the models to other disciplines such as Philosophy, Computer Science, Economics, Physics, and so on. In this sense, Is it  possible to have a model of all? We do not think so. Mathematical models are not more than simplifications of the area  we want to model; indeed, very useful ones which allow a measure forward in knowledge, but the reality to be modeled is much more complex than the model can express and mathematics is not the exception. 

It is worth mentioning that in the non-classical Set Theories mentioned above, the negation formulas are positive formulas; indeed, $\neg x:= x\to \bot$.  Leibniz's law is verified for the positive formulas for each of these Set Theories. It is easy to see that all these systems can not produce {\em paraconsistency}; i.e.,   either only a formula, or its negation is valid.

Our reflexions over Meta-Mathematics are part of our research, allowing an undertanding and development of our objectives. Initially, we believed that to work in Paraconsistent Set Theory we would have to develop a kind of theory of recursive funtions and  transfinite recursion. As above  said, we have at our disposal these tools in  Meta-Mathematics.

\section{Paraconsistent Set Theory and  Leibniez' Law}\label{section3}

Going back to our topic, type ZF Paraconsistent Set Theories (PSTs) can be constructed in two groups; namely, one where Russell's paradox is accepted and another where the theories can be constructed using a paraconsistent logic.

Paraconsistent set theories of the first group have been studied by many authors  (Brady, 1971; Brady \& Routley, 1989; Restall, 1992; Libert, 2005; Weber, 2009, 2010, 2013); all of these accounts start from the observation that ZF was created to avoid the contradiction that can be obtained from the axiom scheme of Comprehension

$$\exists x\forall y(y \in x \leftrightarrow \Psi(y))$$

via Russell’s paradox. Arguing that contradictions are not necessarily devastating in a
paraconsistent setting, these authors reinstate the axiom scheme of Comprehension as acceptable, allow the formation of the Russell set R, and conclude that both $R \in R$ and $R \not\in R$ are true, see \cite{BR-89,Re-92,We-10a,We-10b,We-13,We-09}.

Now, on PST of the second group the following papers can be mentioned \cite{BT, CC-16}. In these works, the authors try to present models for certain PSTs type ZF (that we denote, PS3-ZF) in both papers models for positive fragment of PS3-ZF  are presented; i.e, formulas without paraconsistent negation. Besides, in these papers the philosophical concept called Leibniz Law (LL) is studied, observing that the LL is not verified for formulas with negation for PS3-ZF. Eve more, in \cite{GV}, the author proved that  the axiom  SEPARATION of ZF is not valid for formulas with negation. This fact shows that it is not possible to provide models in this way, but that does not mean that PS3-ZF has no models.

Moreover, we  have worked in other paraconsistent algebrizable logics without success, aiming to prove Leibniz law on a PST. Particularly, Priest's paraconsistent logics that he called da Costa logic were studied by us, see \cite{Pri-09,Pri-11}. This logic is algebrizable with the Blok-Pigozzi's method. We had realized that LL is an essential technical resource to  prove the soundness of all axioms of ZF. Later on, we decided to change the strategy using non-determinism to finding models, we had in our hands two kinds of  non-determinism, one from the Nmatrix of Avron and the other from Fidel's structures. Avron's  non-determinism is more unstable that of  Fidel's . The former is  a non-determinism via multialgebras. In the paper \cite{CFG-20}, we see that the formulas of first-order logic have a disconnect with his corresponding  interpretations,  producing a technical difficulty to give proofs. However, the latter, Fidel's non-determinism strongly uses the algebraic fragment of the system and formulas with negation have a valoration belonging to a certain algebra. These formulas with negation do not have an associated interpretation, but we can always assign a value of truth to them. Hence, as we can see in the paper \cite{FOS2},  Fidel's non-determinism is more stable. The associated interpretation of negation-free first-order formulas works exactly in the same way as the algebraic case. These formulas verify Leibniz law as the intuitionistic case. Constructing $F$-structures-valued models for Paraconsistent Set Theory type ZF based on da Costa’s Logic $C_\omega$ (ZF$C_\omega$), we can see that  Leibniz law is verified by formulas with negation that allow proving that all axioms of ZF are valid on Fidel's models . It is possible to assign  a value of truth belonging to a certain Heyting algebra to the formulas with negation. We do not known which is the real value, but we know there exists and it verifies the law.

On the other hand, we show that $C_\omega$ is not algebrizable in the Blok-Pigozzi’s sence; besides, we present a family of non-algebraic extentions of $C_\omega$ with Fidel's models for each of them. For each extention of $C_\omega$, we associate it a Paraconsistent Set Theory and present a full model for each one. It is worth mentioning that a very important philosophical concept as Leibniz law is definitely the only technical obstacle to getting full models; this fact will be part the our future studies. Another interesting aspect of these PSTs  do not permit Russell's paradox. Besides, the same strategy to use non-determinism can be applied for PS3-ZF. Actually, the logic PS3 seems to have two different F-strutures; namely, one based on Boolean algebras and another based on Heyting alegbras. Moreover, we can treat the algebrizable paraconsistent logics introduced by Priest in this way.

Now, we shall briefly present our results about ZF$C_\omega$ for the details the reader can consult our paper \cite{FOS2}. We fix a model of set theory V and a completed $C_\omega$-structure $(A, N)$. Let us construct a universe of names by transfinite recursion on $(A,N)$:

We fix a model of set theory $\mathbf{V}$ and a completed $C_{\omega}$-structure $\langle A,N\rangle$. Let us construct a universe of {\em names} by transfinite recursion: 
 
$$ {\mathbf{V}_\xi}^{\langle A, N\rangle}=\{x: x\, \textrm{\rm  a function and }\, ran(x)\subseteq A \, \,\textrm{\rm and }\,  dom(x)\subseteq \mathbf{V}_\zeta^{\langle A, N\rangle}\, \textrm{\rm for some}\,  \zeta< \xi \} $$

and 

$$ {\mathbf{V}}^{\langle A, N\rangle}=\{x: x\in {\mathbf{V}_\xi}^{\langle A, N\rangle}\, \textrm{\rm  for some } \xi \} $$

The ${\mathbf{V}}^{\langle A, N\rangle}$ is called the $C_{\omega}$-structure-valued model over $\langle A,N\rangle$. Let us observe that we only need set A in order to define $\mathbf{V}_\xi^{\langle A, N\rangle}$. By ${\cal L}_\in$, we denote the first-order language of set theory which consists of the propositional connectives $\{\to, \wedge, \vee, \neg\}$ of the $C_\omega$ and two binary predicates $\in$ and $\approx$. We can expand this language by adding all the elements of ${\mathbf{V}}^{\langle A, N\rangle}$; the expanded language we will denote ${\cal L}_{\langle A, N\rangle}$.

{\bf Induction principles:} The sets $\mathbf{V}_\zeta=\{x: x\subseteq \mathbf{V}_\xi, \, \textrm{ for some}\, \xi<\zeta\}$ are definable for every ordinal $\xi$ and then, every set $x$ belongs to $\mathbf{V}_\alpha$ for some $\alpha$. 

So, this fact induce a function $rank(x)=$ least ordinal $\xi$ such that $x\in \mathbf{V}_\xi$. Since $rank(x)<rank(y)$ is well-founded we induce a {\em principle of induction on rank}: let $\Psi$ be a property over sets. Assume, for every set $x$, if $\Psi(y)$ holds for every $y$ such that $rank(y)<rank(x)$, then $\Psi(x)$ holds. Thus, $\Psi(x)$ for every $x$. From the latter, the following (meta-)Induction Principles (IP) holds in ${\mathbf{V}}^{\langle A, N\rangle}$:  
\begin{center}\em 
Let us consider a property $\Psi$ over sets.  Assume, for every $x\in {\mathbf{V}}^{\langle A, N\rangle}$, if $\Psi(y)$ holds for every $y\in dom(x)$, then $\Psi(x)$ holds. Hence, $\Psi(x)$ holds for every $x\in {\mathbf{V}}^{\langle A, N\rangle}$.   
\end{center}

By simplicity, we note every set $u\in {\mathbf{V}}^{\langle A, N\rangle}$ by its name $u$ of ${\cal L}_{\langle A, N\rangle}$. Besides, we will write $\varphi(u)$ instead of $\varphi(x/u)$. Now, we are going to define a valuation by induction on the complexity of a closed formula in ${\cal L}_{\langle A, N\rangle}$.

\begin{defi}\label{str}

 For a given complete $C_\omega$-structure $\langle A, N\rangle$,  the mapping $||\cdot||:{\cal L}_{\langle A, N\rangle}\to \langle A, N\rangle $ is defined as follows:

\begin{center}

$||u\in v || =\bigvee\limits_{x\in dom(v)} (v(x) \wedge ||x \approx u ||)  $ \\ [3mm]

$||u\approx v || =\bigwedge\limits_{x\in dom(u)} (u(x)) \to ||x\in v ||) \wedge \bigwedge\limits_{x\in dom(v)} (v(x) \to ||x\in u ||)$\\ [3mm]


$||\varphi \# \psi||  = ||\varphi||  \tilde{\#} ||\psi|| $, for every $\#\in \{\wedge,\vee, \to\}$,\\[3mm]

$||\neg \alpha||\in N_{||\alpha||}$ and $||\neg\neg\alpha||\leq ||\alpha||$,\\ [2mm]


$||\exists x\varphi|| = \bigvee\limits_{{u\in {\mathbf{V}}^{\langle A, N\rangle}}} ||\varphi (u)||$ and $||\forall x\varphi|| = \bigwedge\limits_{{u\in {\mathbf{V}}^{\langle A, N\rangle}}} ||\varphi (u)||$.

$||\varphi||$ is called the {\bf truth-value} of the sentence $\varphi$ in the language ${\cal L}_{\langle A, N\rangle}$ in the $C_\omega$-structure-valued model over  $\langle A, N\rangle$.

\end{center}
\end{defi}

\begin{defi}
A sentence $\varphi$ in the language ${\cal L}_{\langle A, N\rangle}$  is said to be valid in ${\mathbf{V}}^{\langle A, N\rangle}$, which  is denoted by ${\mathbf{V}}^{\langle A, N\rangle} \vDash \varphi$, if $||\varphi||=1$.

\end{defi}

It is important to note that for every completed $C_\omega$-structure $\langle A, N\rangle$, the element $\bigwedge\limits_{x\in A} x$ is the first element of $A$ and so, $A$ is a complete Heyting algebra, we denote by ''$0$'' this element. Besides, for every closed formula $\phi$ of ${\cal L}_{\langle A, N\rangle}$ we have $||\phi||\in A$. Then, the following lemma has the same proof as intuitionistic set theory.

\begin{lemma}
For a given completed $C_\omega$-structure $\langle A, N\rangle$. Then,  $|| u \approx u ||=1$,  $u(x)\leq || x\in u||$ for every $x\in dom(u)$, and $||u=v||=||v=u||$, for every $u,v\in {\mathbf{V}}^{\langle A, N\rangle}$
\end{lemma}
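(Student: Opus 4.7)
The plan is to exploit the fact that the definitions of $\|u \in v\|$ and $\|u \approx v\|$ use only the Heyting operations $\wedge$, $\vee$, $\to$ together with the completeness of $A$; negation appears nowhere in these clauses, so the non-determined component $N$ is simply irrelevant here and the argument collapses to the standard Heyting-valued one. This is exactly what justifies the parenthetical remark ``the same proof as intuitionistic set theory.''

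First I would dispatch symmetry $\|u \approx v\| = \|v \approx u\|$. Reading off the definition, $\|u \approx v\|$ is a conjunction of two clauses—one ranging over $dom(u)$, one over $dom(v)$—that are interchanged by swapping $u$ and $v$. Since $\wedge$ is commutative in $A$, the equality is immediate with no induction needed.

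Next I would prove $\|u \approx u\| = 1$ and $u(x) \leq \|x \in u\|$ simultaneously by the Induction Principle stated just before the lemma. Assume inductively that $\|y \approx y\| = 1$ for every $y \in dom(u)$. For any $x \in dom(u)$, unfolding the membership clause gives
\[
\|x \in u\| \;=\; \bigvee_{y \in dom(u)} \bigl(u(y) \wedge \|y \approx x\|\bigr) \;\geq\; u(x) \wedge \|x \approx x\| \;=\; u(x) \wedge 1 \;=\; u(x),
\]
which establishes the second claim. Consequently $u(x) \to \|x \in u\| = 1$ in the Heyting algebra $A$, so both conjuncts in the definition of $\|u \approx u\|$ are infima of terms equal to $1$, yielding $\|u \approx u\| = 1$ and closing the induction.

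I do not anticipate a real obstacle. The only point that warrants a moment of care is that the induction is formally carried out using the meta-level Induction Principle stated for $\mathbf{V}^{\langle A,N\rangle}$ (which rests on the well-foundedness of the rank in the ambient $\mathbf{V}$); and that taking $y = x$ inside the supremum is legitimate because $x \in dom(u)$. Everything else is purely Heyting-algebraic manipulation.
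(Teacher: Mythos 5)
Your proof is correct and is essentially the approach the paper intends: the paper gives no details for this lemma, merely remarking that it ``has the same proof as intuitionistic set theory'' since the clauses for $\in$ and $\approx$ involve only the Heyting operations, and your argument (rank induction for $\|u\approx u\|=1$, the term $y=x$ in the join for $u(x)\leq\|x\in u\|$, and commutativity of $\wedge$ for symmetry) is exactly that standard Heyting-valued argument spelled out.
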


\subsection{Ontological principle}

The {\em identity of indiscernibles} is an ontological principle that states that there cannot be separate objects or entities that have all their properties in common. To suppose that two things {\em indiscernible} is suppose they are the same thing under different names. 

A form of the principle {\em indiscernibility of identicals} is attributed to the German philosopher {\em Gottfried Wilhelm Leibniz}. In the non-classical  set theories, we have that the manes represent  objects and if we have  identical  objects they would have to have the same properties. This is known as {\em indiscernibility of identicals} and it could be considered as Leibniz's law by the following axiom:
 $$ u \approx v   \wedge \varphi(u) \to \varphi(v) $$

In the next, we are going to consider complete $C_\omega$-structures which verify the Leibniz's law. It is important to note that we have $C_\omega$-structures that verify this law,  it is enough to require $1\in N_x$ for all $x\in A$ for every $x\not=1$ and $0\in N_1$.

We will adopt the following notation, for every formula $\varphi(x)$ and  every $u\in \mathbf{V}^{\langle A, N\rangle}$:  $\exists x\in u \varphi(x)= \exists x(x\in u \wedge \varphi(x))$ and $\forall x\in u \varphi(x)= \forall x(x\in u \to \varphi(x))$. Thus, we have the following

\begin{lemma}\label{BQ}
Let $\langle A, N\rangle$ be a  complete Leibniz $C_\omega$-structure, for every formula $\varphi(x)$ and every $u\in \mathbf{V}^{\langle A, N\rangle}$ we have 
 $$|| \exists x\in u \varphi(x)||= \bigvee\limits_{x\in dom(u)} (u(x) \wedge || \varphi(x)||),$$ 
 $$|| \forall x\in u \varphi(x)||= \bigwedge\limits_{x\in dom(u)} (u(x) \to || \varphi(x)||).$$
\end{lemma}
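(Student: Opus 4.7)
The plan is to compute both sides directly from Definition \ref{str}, obtaining each equality by showing $\leq$ and $\geq$. The Leibniz property for $\langle A, N\rangle$ (namely $||u\approx v||\wedge ||\varphi(u)||\leq ||\varphi(v)||$) will be the only nontrivial ingredient; otherwise I will just unfold definitions and use that $A$ is a complete Heyting algebra (so joins distribute over meets, $(a\vee b)\to c = (a\to c)\wedge (b\to c)$, and residuation $a\wedge b \leq c \iff a \leq b\to c$ is available).

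For the existential, I first unfold
\[
|| \exists x\in u\,\varphi(x)||\,=\,\bigvee_{w\in \mathbf{V}^{\langle A,N\rangle}} \bigl(||w\in u||\wedge ||\varphi(w)||\bigr)\,=\,\bigvee_{w}\bigvee_{x\in dom(u)}\bigl(u(x)\wedge ||x\approx w||\wedge ||\varphi(w)||\bigr).
\]
For $\geq$ I would specialize $w=x$ for $x\in dom(u)$ and use $||x\approx x||=1$ from the previous lemma. For $\leq$ I would apply Leibniz's law in the form $||x\approx w||\wedge ||\varphi(w)||\leq ||\varphi(x)||$ inside each summand, which pins each inner term below $u(x)\wedge ||\varphi(x)||$; the outer join over $w$ then disappears, giving the desired upper bound.

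For the universal, I would similarly expand
\[
||\forall x\in u\,\varphi(x)||\,=\,\bigwedge_{w}\bigl(||w\in u||\to ||\varphi(w)||\bigr)\,=\,\bigwedge_{w}\bigwedge_{x\in dom(u)}\bigl((u(x)\wedge ||x\approx w||)\to ||\varphi(w)||\bigr),
\]
using that a join implies iff each disjunct implies. The $\leq$ inclusion is obtained by taking $w=x$ and $||x\approx x||=1$. For $\geq$ I use Leibniz's law rewritten via residuation as $||\varphi(x)||\leq ||x\approx w||\to ||\varphi(w)||$, so that $u(x)\to ||\varphi(x)||\leq u(x)\to (||x\approx w||\to ||\varphi(w)||) = (u(x)\wedge ||x\approx w||)\to ||\varphi(w)||$, and then take the meet over $w$ and $x$.

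The only delicate step is the $\leq$ direction of the existential (and dually $\geq$ of the universal), where the extra variable $w$ ranging over all of $\mathbf{V}^{\langle A,N\rangle}$ has to be collapsed down to $dom(u)$; this is precisely where Leibniz's law carries the weight, since without it one cannot transfer the truth value of $\varphi(w)$ to that of $\varphi(x)$ for $x\in dom(u)$ satisfying $x\approx w$. Everything else reduces to Heyting-algebra manipulations that transfer verbatim from the intuitionistic case.
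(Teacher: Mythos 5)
Your proposal is correct and follows essentially the same route as the paper: unfold $||\cdot||$ for the bounded quantifiers, use completeness/distributivity and residuation in the Heyting algebra, and invoke Leibniz's law (together with $||x\approx x||=1$ and symmetry of $\approx$) to collapse the join/meet over all of $\mathbf{V}^{\langle A,N\rangle}$ down to $dom(u)$. The only cosmetic difference is in the easy direction of the universal case, where the paper bounds the meet via $u(x)\leq ||x\in u||$ and antitonicity of $\to$, while you expand $(\bigvee_i a_i)\to c=\bigwedge_i(a_i\to c)$ and specialize $w=x$; both are equivalent Heyting-algebra manipulations.
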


The basic system of paraconsistent set theory here is called ZF$C_\omega$ and consists of first order version Q$C_\omega$ of $C_\omega$ over the first-order signature $\Theta_\omega$ which contains an equality predicate $\approx$ and a binary predicate $\in$.

\begin{defi} The system ZF$C_\omega$ is the first order theory with equality obtained from the logic Q$C_\omega$ over $\Theta_\omega$ by adding the following set-theoretic axiom schemas:
(Extensionality), (Pairing), (Colletion), (Powerset), (Separation), (Empty set), 
(Union), (Infinity) and (Induction). 
\end{defi}

\begin{theo} Let $\langle A, N\rangle$ be a complete $C_\omega$-structure such that ${\mathbf{V}}^{\langle A, N\rangle}$  satisfies Leibniz's Law. Then, the all set-theoretic axioms of ZF$C_\omega$ are valid in${\mathbf{V}}^{\langle A, N\rangle} \vDash \varphi$.
\end{theo}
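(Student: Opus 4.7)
The plan is to verify each set-theoretic axiom separately, using the fact that the algebra $A$ underlying the $C_\omega$-structure $\langle A,N\rangle$ is a complete Heyting algebra, so that the negation-free fragment of the truth-value assignment behaves exactly as in the Heyting-valued model of intuitionistic set theory. Since Lemma \ref{BQ} gives the expected formulas for bounded quantifiers over domains of names, all the standard constructions from the Heyting-valued case can be imported directly, provided we can take care of the occurrences of $\neg$ in the axioms. This is precisely where the Leibniz Law hypothesis will be invoked.

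First I would dispatch the purely ``positive'' axioms by exhibiting canonical names. For \textbf{Pairing}, given $u,v\in\mathbf{V}^{\langle A,N\rangle}$, take the name $w$ with $\mathrm{dom}(w)=\{u,v\}$ and $w(u)=w(v)=1$; using the definition of $||z\in w||$ and Lemma \ref{BQ}, one computes $||\forall z(z\in w\leftrightarrow(z\approx u\vee z\approx v))||=1$. \textbf{Union}, \textbf{Powerset}, \textbf{Pairing}, and \textbf{Collection} follow in the same spirit by mimicking the Heyting-valued constructions (for Powerset, take names whose domains are sets of partial functions into $A$ on $\mathrm{dom}(u)$; for Collection, use the reflection theorem in $\mathbf{V}$ to replace the existential witness by a set-sized one). \textbf{Infinity} is handled by forming the canonical name $\check\omega$ associated with $\omega\in\mathbf{V}$, checking by induction on $n$ that $||\check n^{+}\in\check\omega||=1$. \textbf{Extensionality} reduces to a direct computation from the definitions of $||u\approx v||$ and $||z\in u||$ together with the bounded-quantifier lemma; no explicit use of negation is required.

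The two delicate axioms are \textbf{Empty set}, \textbf{Separation} and \textbf{Induction}, because they all involve the paraconsistent negation $\neg$. For Empty set, the required name is the function with empty domain; the truth value $||z\approx z||=1$ together with the Leibniz condition $0\in N_1$ lets us choose $||\neg(z\approx z)||=0$, and the axiom then drops out. For Separation, given $u$ and a formula $\phi(z)$, I define the name $w$ with $\mathrm{dom}(w)=\mathrm{dom}(u)$ and $w(x)=u(x)\wedge||\phi(x)||$; the nontrivial direction $||z\in w\to(z\in u\wedge\phi(z))||=1$ is the place where Leibniz's Law is essential, since one must transport $||\phi(x)||$ across $||x\approx z||$ to obtain $||\phi(z)||$, and this is exactly what LL permits. \textbf{Induction} is treated by the meta-induction principle on rank: assuming $||\forall x(\forall y\in x\,\phi(y)\to\phi(x))||=1$, an induction on $\mathrm{rank}(u)$ shows $||\phi(u)||=1$ for every $u\in\mathbf{V}^{\langle A,N\rangle}$, using Lemma \ref{BQ} to unfold $\forall y\in u$.

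The main obstacle I anticipate is uniformly handling the interplay between the non-deterministic valuation of $\neg$ (the clause $||\neg\alpha||\in N_{||\alpha||}$) and the substitution needed for Separation and Induction. In the Boolean- or Heyting-valued case one shows by induction on formula complexity that $||u\approx v||\wedge ||\phi(u)||\leq ||\phi(v)||$; in the $C_\omega$-setting this inequality must be assumed as the Leibniz hypothesis rather than derived, and one has to verify it is preserved through $\wedge,\vee,\to,\exists,\forall$ and, crucially, through the non-deterministic $\neg$. The hypothesis ``$1\in N_x$ for $x\neq 1$ and $0\in N_1$'' is what makes the negation clause compatible with substitution of equal names. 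Once this is in place, the rest of the verification is a routine adaptation of the Heyting-valued template, and the theorem follows.
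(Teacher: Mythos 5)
Your overall strategy coincides with the paper's: regard $A$ as a complete Heyting algebra, use the bounded-quantifier lemma (Lemma \ref{BQ}) to reduce everything to computations over $dom(u)$, and import the Heyting-valued verifications of Pairing, Union, Separation, Collection, Infinity, Extensionality, Powerset, invoking Leibniz's Law exactly where a formula $\phi$ (possibly containing $\neg$) has to be transported across $||x\approx z||$, as in Separation and in the proof of Lemma \ref{BQ} itself. Up to that point the proposal is a faithful reconstruction of the paper's template.

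There is, however, a genuine gap in your treatment of (Empty set). The clause $||\neg\alpha||\in N_{||\alpha||}$ is a constraint on the valuation that comes with the model, not a freedom you may exercise while checking an axiom: once the Leibniz model ${\mathbf{V}}^{\langle A,N\rangle}$ with its valuation $||\cdot||$ is fixed, $||\neg(z\approx z)||$ is some definite element of $N_{1}$, and neither Leibniz's Law nor the hypotheses of the theorem force it to equal $0$; the condition ``$1\in N_x$ for $x\neq 1$ and $0\in N_1$'' is mentioned in the paper only as a way of exhibiting examples of Leibniz structures, it is not an assumption of the theorem. If $||\neg(z\approx z)||=a\neq 0$, then for your empty-domain witness $w$ one gets $||\neg(z\approx z)\to z\in w||=a\to 0\neq 1$, and the axiom fails for that name. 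The paper's device (spelled out for the N4 case, and the same one is intended for $C_\omega$) is to take instead a name $w$ whose membership values are exactly the value of the negation: $dom(w)$ nonempty and $ran(w)\subseteq\{||\neg(u\approx u)||\}$, so that $||u\in w||=||\neg(u\approx u)||$ whatever element of $N_1$ this is --- in a paraconsistent model the ``empty set'' may legitimately carry nonzero membership degrees. Relatedly, your closing remark that one must verify Leibniz's Law is ``preserved through'' $\wedge,\vee,\to,\exists,\forall$ and the non-deterministic $\neg$ is off target for this theorem: the law for arbitrary formulas is the hypothesis, so nothing is to be proved there; and if one only assumed it for atomic formulas it could not in general be propagated through $\neg$, which is precisely why the full law is assumed.
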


\begin{coro} The axiom of scheme Comprehension is not valid in  ${\mathbf{V}}^{\langle A, N\rangle}$.
\end{coro}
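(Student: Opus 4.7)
The plan is to exhibit a single instance of the Comprehension scheme whose truth value is strictly below $1$. A natural choice is the trivially satisfied formula $\Psi(y):=(y\approx y)$: by the preceding lemma, $||u\approx u||=1$ for every $u\in\mathbf{V}^{\langle A,N\rangle}$, so the corresponding Comprehension instance collapses to the assertion of a ``universal name''. Unfolding the valuation clauses, one has
\[
||\exists x\forall y(y\in x \leftrightarrow y\approx y)|| \;=\; \bigvee_{w\in\mathbf{V}^{\langle A,N\rangle}}\bigwedge_{u\in\mathbf{V}^{\langle A,N\rangle}} ||u\in w||,
\]
so the task reduces to showing that this element of the underlying complete Heyting algebra $A$ is not $1$.

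For any fixed name $w$, one has $dom(w)\subseteq \mathbf{V}_\xi^{\langle A,N\rangle}$ for some meta-theoretic ordinal $\xi$. I would then produce, via transfinite recursion, a canonical name $\hat{\eta}$ for an ordinal $\eta>\xi$; because every $x\in dom(w)$ has $rank(x)<rank(\hat{\eta})$, a rank-induction (justified by the Induction Principle for names established in the construction of $\mathbf{V}^{\langle A,N\rangle}$) applied to the clause for $||\cdot\approx\cdot||$ forces $||x\approx\hat{\eta}||\neq 1$ for every $x\in dom(w)$. Consequently
\[
||\hat{\eta}\in w|| \;=\; \bigvee_{x\in dom(w)}\bigl(w(x)\wedge||x\approx\hat{\eta}||\bigr) \;\neq\; 1,
\]
so that $\bigwedge_{u}||u\in w||$ cannot attain $1$ for any fixed $w$; no single name is a universal set.

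The main obstacle I expect is the passage from this pointwise bound to the global statement that $\bigvee_{w}\bigwedge_{u}||u\in w||<1$, because in a complete Heyting algebra the supremum of elements each strictly below $1$ may still equal $1$. My plan for this is twofold: on the one hand, appeal directly to the $\in$-Induction axiom of ZF$C_\omega$, which by the preceding Theorem is valid in $\mathbf{V}^{\langle A,N\rangle}$, applied to the meta-property ``$x$ is not universal''; on the other, as a cross-check, invoke a Cantor-style diagonal using the now-valid Power-set and Separation axioms together with Leibniz's Law, producing from any putative universal $w$ a uniform witness $u_w$ whose membership value in $w$ is bounded away from $1$. Either argument refutes the existence of a universal name and thereby establishes the failure of Comprehension.
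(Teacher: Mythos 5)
You pick the same instance as the paper (with $\Psi(y):=(y\approx y)$ the scheme collapses to $\exists x\forall y(y\in x)$, and the paper settles the corollary by asserting $||\exists x\forall y(y\in x)||=0$), but your argument for bounding its value has a genuine gap. The step ``consequently $||\hat{\eta}\in w||=\bigvee_{x\in dom(w)}(w(x)\wedge||x\approx\hat{\eta}||)\neq 1$'' does not follow from ``$||x\approx\hat{\eta}||\neq 1$ for every $x\in dom(w)$'': in a complete Heyting algebra a supremum of elements each strictly below $1$ may equal $1$, which is exactly the phenomenon you flag for the outer supremum over $w$, and it already afflicts this inner join as soon as $dom(w)$ is infinite. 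Your two fallback plans do not repair this. The validity of the Induction schema is a statement about object-language formulas, and ``$x$ is not universal'' would have to be expressed with the model's negation, which here is paraconsistent and non-deterministic ($||\neg\varphi||$ is only constrained to lie in $N_{||\varphi||}$, and the language of $C_\omega$ has no $\bot$); so no instance of Induction yields the meta-level conclusion that $||\exists x\forall y(y\in x)||<1$. For the same reason the Russell/Cantor diagonal via Separation and Power-set stalls at its final step, which classically and intuitionistically relies on an explosive negation that is simply not available in $\mathbf{V}^{\langle A,N\rangle}$.

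The repair, and what the paper's one-line proof implicitly relies on, is to prove the exact statement rather than ``$\neq 1$'': by induction on ordinals, using only the positive clauses for $\approx$ and $\in$ (so the argument is word-for-word the Boolean/intuitionistic one), one shows $||x\approx\hat{\eta}||=0$ whenever the name rank of $x$ is strictly below that of $\hat{\eta}$; indeed, if $||x\approx\hat{\eta}||\neq 0$ then for each $\gamma<\eta$ one has $||x\approx\hat{\eta}||\leq||\hat{\gamma}\in x||=\bigvee_{y\in dom(x)}(x(y)\wedge||y\approx\hat{\gamma}||)$, so some $y\in dom(x)$ has $||y\approx\hat{\gamma}||\neq 0$, and the induction hypothesis pushes the rank of $x$ up to that of $\hat{\eta}$. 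Choosing $\eta$ above the ranks of all members of $dom(w)$ then gives $||\hat{\eta}\in w||=0$ on the nose, hence $\bigwedge_{u}||u\in w||=0$ for every $w$, and the outer supremum is a supremum of zeros, i.e. $||\exists x\forall y(y\in x)||=0$. With ``$=0$'' in place of ``$\neq 1$'' the completeness subtlety you worried about disappears, and you recover exactly the value the paper asserts; note also that no appeal to Leibniz's law or to the already-validated axioms is needed for this computation.
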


It is enough to see that $||\exists x \forall y(y\in x)||=0$, this formula $\exists x \forall y(y\in x)$  is an instance of Comprehension.

\section{Leibniz's law and its models}\label{section4}

In this section we shal analyze the minimal conditions  for a models constructed over a Heyting algebras that we need to prove that several Zermelo-Fraenkel's set-theoretic axioms are valid in a suitable algebraic-like models.

We fix a model of set theory $\mathbf{V}$ and a completed reasonable implication algebra  $A$. Let us construct a universe of {\em names} by transfinite recursion: 
 
$$ {\mathbf{V}_\xi}^{A}=\{x: x\, \textrm{\rm  a function and }\, ran(x)\subseteq A \, \,\textrm{\rm and }\,  dom(x)\subseteq \mathbf{V}_\zeta^{A}\, \textrm{\rm for some}\,  \zeta< \xi \} $$

$$ {\mathbf{V}}^{A}=\{x: x\in {\mathbf{V}_\xi}^{A}\, \textrm{\rm  for some } \xi \} $$

The class ${\mathbf{V}}^{A}$ is called the algebraic-valued model over $A$. Let us observe that we only need the set $A$ in order to define  ${\mathbf{V}_\xi}^{\langle A, N\rangle}$.  By ${\cal L}_\in$, we denote the first-order language of set theory which consists of only the propositional connectives $\{\to, \wedge,\vee, \neg\}$ of the $C_\omega$ and two binary predicates $\in$ and $=$. We can expand this language by adding all  the elements of ${\mathbf{V}}^{\langle A, N\rangle}$; the expanded language we will denote ${\cal L}_{\langle A, N\rangle}$. For this construction  of models we also have {\bf Induction principles} as the case above.

Now, we shall consider a minimal requirement for defining value of truth of formulas in order to prove some of set-theoretic axiom of Zermelo-Freankel for Set Theory are valid. Now, for a given completed Heyting algebra  $A$,  the mapping $||\cdot||:{\cal L}_{A}\to A$ is defined as follow:

\begin{center}

$||u\in v || =\bigvee\limits_{x\in dom(v)} (v(x) \wedge ||x \approx u ||)  $ \\ [3mm]

$||u\approx u ||=1$

$||u\approx v || \leq ||\phi(u)\to \phi(v) ||$ for every formula $\phi$,\\ [3mm]

$ ||\neg\varphi ||= ||\varphi ||^\ast$,\\[3mm]

$||\varphi \# \psi||  = ||\varphi||  \tilde{\#} ||\psi|| $, for every $\#\in \{\wedge,\vee, \to\}$,\\[3mm]


$||\exists x\varphi|| = \bigvee\limits_{{u\in \mathbf{V}}^{A}} ||\varphi (u)||$ and $||\forall x\varphi|| = \bigwedge\limits_{{u\in \mathbf{V}}^{A}} ||\varphi (u)||$.

$||\varphi||$ is called the {\bf truth-value} of the sentence $\varphi$ in the language ${\cal L}_{ A}$ in the algebraic-valued model over  $A$.

\end{center}

\begin{defi}
A sentence $\varphi$ in the language ${\cal L}_{A}$  is said to be valid in ${\mathbf{V}}^{ A}$, which  is denoted by ${\mathbf{V}}^{A} \vDash \varphi$, if $||\varphi||=1$.

\end{defi}

\begin{lemma}\label{lema1}
For a given completed reasonable implication algebra $A$. Then,  $u,v\in {\mathbf{V}}^{A}$ we have

\begin{itemize}

\item[\rm (i)] $||u=v||=||v=u||$, 
\item[\rm (ii)] $u(x)\leq || x\in u||$ for every $x\in dom(u)$.

\end{itemize}
\end{lemma}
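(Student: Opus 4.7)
For part (ii), the plan is essentially to read off the inequality from the definition of $\|\cdot \in \cdot\|$. Since $||x \in u|| = \bigvee_{y \in \mathrm{dom}(u)} (u(y) \wedge ||y \approx x||)$ is a supremum and $x$ itself lies in $\mathrm{dom}(u)$, the particular term $y = x$ is below the supremum. Combining this with the reflexivity clause $||x \approx x|| = 1$ of the valuation gives $u(x) = u(x) \wedge 1 = u(x) \wedge ||x \approx x|| \leq ||x \in u||$, which is exactly the required bound.

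For part (i), the idea is to extract symmetry from Leibniz's law by choosing the right substitution formula. I would apply the clause $||u \approx v|| \leq ||\phi(u) \to \phi(v)||$ to the formula $\phi(x) := (x \approx u)$, so that $\phi(u)$ becomes $(u \approx u)$ and $\phi(v)$ becomes $(v \approx u)$. This yields
\[
||u \approx v|| \leq ||(u \approx u) \to (v \approx u)||.
\]
Now I would use reflexivity $||u \approx u|| = 1$ together with the Heyting-style identity $1 \,\tilde{\to}\, a = a$ (available in the reasonable implication algebra $A$) to rewrite the right-hand side as $||v \approx u||$, giving $||u \approx v|| \leq ||v \approx u||$. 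Swapping the roles of $u$ and $v$ and repeating the same argument with $\phi(x) := (x \approx v)$ gives the reverse inequality, and antisymmetry in $A$ concludes equality.

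The anticipated main obstacle is the choice of $\phi$ in (i): Leibniz's law gives a family of inequalities parametrized by arbitrary formulas, and only the right choice collapses one side to $1$ via reflexivity so that the implication simplifies to the desired truth value. One must also verify that the implication operation on the reasonable implication algebra satisfies $1 \,\tilde{\to}\, a = a$; this is a standard property of Heyting-type implications and, given the setting of the paper, can be taken for granted in the class of structures under consideration. No quantifier or induction on rank is needed: the argument is purely propositional once the correct instantiation of Leibniz's law is made.
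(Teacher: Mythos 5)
Your proposal is correct and follows essentially the same route as the paper's proof: part (ii) is obtained exactly as you describe, by bounding the supremum $\bigvee_{y\in dom(u)}(u(y)\wedge \|y\approx x\|)$ below by the term at $y=x$ together with $\|x\approx x\|=1$, and part (i) instantiates the Leibniz-law clause with the formula $\phi(z):= z\approx u$ and simplifies via $\|u\approx u\|=1$ and $1\to a=a$, then swaps $u$ and $v$. Your caveat about $1\,\tilde{\to}\,a=a$ is harmless, since the paper in fact works with (complete) Heyting algebras where this identity holds.
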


\begin{proof}
(i) Let us consider the formula $\phi(z):= u = z$, then $||u=v || \leq ||\phi(u)\to \phi(v) ||=||u=u||\to ||v=u||=1\to ||v=u||=  ||v=u||$. Analogously, we have $||v=u||\leq ||u= v ||$.

\

(ii) $|| x\in u||=\bigvee\limits_{z\in dom(u)} (u(z) \wedge ||z = x ||)\geq u(x) \wedge ||x = x ||=  u(x)$. \end{proof}

\

We will adopt the following notation, for every formula $\varphi(x)$ and  every $u\in \mathbf{V}^{\langle A, N\rangle}$:  $\exists x\in u \varphi(x)= \exists x(x\in u \wedge \varphi(x))$ and $\forall x\in u \varphi(x)= \forall x(x\in u \to \varphi(x))$. 

Now, we recall that for a given Heyting algebra $A$ we have the following properties hold: (P1) $x\wedge y\leq z$ implies $x\leq y \Rightarrow z$  and (P3) $y\leq z$ implies $z \Rightarrow x \leq y \Rightarrow x$ for any $x,y,z\in A$.

Thus, we have the following

\begin{lemma}\label{BQ}
Let $A$ be a  Heyting algebra algebra, for every formula $\varphi(x)$ and every $u\in \mathbf{V}^{A}$ we have 
 $$|| \exists x\in u \varphi(x)||= \bigvee\limits_{x\in dom(u)} (u(x) \wedge || \varphi(x)||),$$ 
 $$|| \forall x\in u \varphi(x)||= \bigwedge\limits_{x\in dom(u)} (u(x) \to || \varphi(x)||).$$
\end{lemma}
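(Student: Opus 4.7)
The plan is to unfold the abbreviations $\exists x\in u\,\varphi(x)=\exists x(x\in u \wedge \varphi(x))$ and $\forall x\in u\,\varphi(x)=\forall x(x\in u \to \varphi(x))$, apply the definition of $\|\cdot\|$ for the unbounded quantifiers, and then replace the join/meet over all of $\mathbf{V}^{A}$ by a join/meet over $dom(u)$. This reduction is exactly where Leibniz's law does its work: any $v\in \mathbf{V}^{A}$ that contributes nontrivially must already be approximated, via $\|x\approx v\|$, by some $x\in dom(u)$, and Leibniz's law lets us transport $\|\varphi(v)\|$ to $\|\varphi(x)\|$.

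For the existential clause, one inequality is immediate: for $x\in dom(u)$, Lemma \ref{lema1}(ii) gives $u(x)\le \|x\in u\|$, so $u(x)\wedge\|\varphi(x)\|\le \|x\in u\|\wedge\|\varphi(x)\|\le \|\exists x(x\in u\wedge\varphi(x))\|$, and taking the join over $x\in dom(u)$ proves the $\ge$ direction. For the converse, unfold $\|v\in u\|=\bigvee_{x\in dom(u)}(u(x)\wedge\|x\approx v\|)$ and distribute $\wedge$ over $\vee$ (valid in a Heyting algebra) to obtain
\[
\|v\in u\|\wedge\|\varphi(v)\|=\bigvee_{x\in dom(u)}\bigl(u(x)\wedge\|x\approx v\|\wedge\|\varphi(v)\|\bigr).
\]
Now apply Leibniz's law with $\phi(z):=\varphi(z)$ together with Lemma \ref{lema1}(i): $\|x\approx v\|=\|v\approx x\|\le \|\varphi(v)\to\varphi(x)\|$, whence by the residuation property (P1), $\|x\approx v\|\wedge\|\varphi(v)\|\le\|\varphi(x)\|$. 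Each summand is therefore $\le u(x)\wedge\|\varphi(x)\|$. Taking the join over $v\in\mathbf{V}^{A}$ yields the desired upper bound.

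For the universal clause the argument is dual, but the bookkeeping is slightly more delicate. The $\le$ direction uses (P3): since $u(x)\le\|x\in u\|$, we have $\|x\in u\|\to\|\varphi(x)\|\le u(x)\to\|\varphi(x)\|$, and the meet $\bigwedge_{v}(\|v\in u\|\to\|\varphi(v)\|)$ lies below each term on the right. For the $\ge$ direction, fix $v\in\mathbf{V}^{A}$ and, using the Heyting adjunction, it suffices to show
\[
\Bigl(\bigwedge_{x\in dom(u)}(u(x)\to\|\varphi(x)\|)\Bigr)\wedge\|v\in u\|\le\|\varphi(v)\|.
\]
Expand $\|v\in u\|$ as a join over $dom(u)$, distribute, and bound each summand: for fixed $y\in dom(u)$, the meet dominates $u(y)\to\|\varphi(y)\|$, so meeting with $u(y)$ gives $\le\|\varphi(y)\|$, and then meeting with $\|y\approx v\|$ and invoking Leibniz's law exactly as before yields $\le\|\varphi(v)\|$. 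Taking the join over $y\in dom(u)$ completes the estimate.

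The main obstacle is the universal case: one must commute a meet through the join appearing in $\|v\in u\|$ and then apply Leibniz's law in the correct direction (transporting along $\|y\approx v\|$ rather than $\|v\approx y\|$). The existential case and the routine Heyting manipulations are straightforward once the distributive and residuation laws (P1), (P3) are in place.
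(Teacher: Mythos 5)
Your proof is correct and follows essentially the same route as the paper's: unfold the bounded quantifiers, distribute meets over joins in the complete Heyting algebra, and use Leibniz's law together with (P1), (P3) and Lemma \ref{lema1} to transport $\|\varphi(v)\|$ along $\|x\approx v\|$ down to $dom(u)$. The only cosmetic difference is in the easy inequality of the existential clause, where you use $u(x)\leq\|x\in u\|$ directly while the paper recovers it from the $x\approx x$ instance inside the join; the two are interchangeable.
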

\begin{proof} Form the definition of $||\cdot||$ we have:

  $|| \exists x\in u \varphi(x)||= || \exists x (x\in u \wedge \varphi(x))||= \bigvee\limits_{v\in  \mathbf{V}^{A}} (||v\in u || \wedge || \varphi(v)||)=  \bigvee\limits_{v\in  \mathbf{V}^{A}} \bigvee\limits_{x\in dom(u)} (u(x)\wedge ||x=v|| \wedge || \varphi(v)||)=   \bigvee\limits_{x\in dom(u)} u(x)\wedge \bigvee\limits_{v\in  \mathbf{V}^{A}} (||x=v|| \wedge || \varphi(v)||)$.
  
  \
  
  Now, we have $|| v=x\wedge\varphi(v)||\leq ||\varphi(x)||$ and $|| x=x\wedge\varphi(x)||=||\varphi(x)||$. Therefore,
  
  $\bigvee\limits_{z\in dom(u)} u(x)\wedge \bigvee\limits_{v\in  \mathbf{V}^{A}} (||z=u|| \wedge || \varphi(v)||)= \bigvee\limits_{x\in dom(u)}  ||u(x)\wedge \varphi(x)||$.

\

On the other hand, 

$|| \forall x\in u \varphi(x)||= || \forall x(x\in u \to \varphi(x))||= \bigwedge\limits_{v\in  \mathbf{V}^{A}} ||v\in u|| \to ||\varphi(v)||$

Then, we have

$ \bigwedge\limits_{x\in dom(u) } [u(x)\to||\varphi(x) ||] \wedge ||v\in u||=\bigwedge\limits_{x\in dom(u) } [u(x)\to||\varphi(x) ||] \wedge \bigvee\limits_{x\in dom(u) } (u(x)\wedge  ||v=x||)= \bigvee\limits_{x\in dom(u) }(\bigwedge\limits_{x\in dom(u) } [u(x)\to||\varphi(x) ||] \wedge u(x)\wedge  ||v=x||)\leq(P5) \bigvee\limits_{x\in dom(u) } ||\varphi(x) ||\wedge  ||v=x||\leq ||\varphi(v) ||$

\

Form the latter and (P1), we can conclude that $ \bigwedge\limits_{x\in dom(u) } [u(x)\to||\varphi(x) ||] \leq ||v\in u||\to ||\varphi(v) ||$.

\

Now using Lemma 2.3 (ii) and (P3) we obtain 

$\bigwedge\limits_{v\in  \mathbf{V}^{A}} ||v\in u||\to ||\varphi(v) ||\leq \bigwedge\limits_{v\in  dom(u)} ||v\in u||\to ||\varphi(v) ||\leq \bigwedge\limits_{v\in  dom(u)} ||u(v)||\to ||\varphi(v) ||$. \end{proof}

\begin{defi}
Let $A$ be  a complete Heyting algebra. Given collection of sets $\{u_i: i\in I\} \subseteq \mathbf{V}^{ A}$ and $\{a_i: i\in I \}\subseteq A$, then mixture $\Sigma_{i\in I} a_i\cdot u_i$ is the fucntion $u$ with $dom(u)=\bigcup\limits_{i\in I} dom(u_i)$ and $u(x)= \bigvee\limits_{i\in I} a_i \wedge || x\in u_i||$.
\end{defi}

The following result is known as {\em Mixing Lemma} and its proof is exactly the same for intuitionistic case because it is an assertion about positive formulas. 

\begin{lem}
Let $u$ be the mixture $\Sigma_{i\in I} a_i\cdot u_i$. If $a_i\wedge a_j\leq || u_i = u_j||$ for all $i,j\in I$, then $a_i\leq || u_i = u||$.  
\end{lem}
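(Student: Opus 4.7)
The plan is to unfold $\|u_i \approx u\|$ according to its recursive definition, which is a conjunction of two big meets, and to bound each one from below by $a_i$ separately. Concretely, it suffices to prove the following two inequalities:
\begin{itemize}
\item[(a)] $a_i \wedge u_i(x) \leq \|x \in u\|$ for every $x \in \mathrm{dom}(u_i)$,
\item[(b)] $a_i \wedge u(x) \leq \|x \in u_i\|$ for every $x \in \mathrm{dom}(u)$,
\end{itemize}
because by residuation in the underlying Heyting algebra these are equivalent to $a_i \leq u_i(x) \to \|x \in u\|$ and $a_i \leq u(x) \to \|x \in u_i\|$, respectively, and the desired conclusion then follows by taking meets over the respective domains.

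For (a), I would argue directly from the construction of the mixture. Since $\mathrm{dom}(u_i) \subseteq \mathrm{dom}(u)$ and $u(x) = \bigvee_j a_j \wedge \|x \in u_j\|$, Lemma 2.3(ii) gives $u_i(x) \leq \|x \in u_i\|$, hence
$$a_i \wedge u_i(x) \;\leq\; a_i \wedge \|x \in u_i\| \;\leq\; u(x) \;\leq\; \|x \in u\|,$$
where the last inequality is again Lemma 2.3(ii) applied to $u$ itself. This step is essentially bookkeeping.

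For (b), which is the heart of the argument, I would expand $u(x) = \bigvee_j a_j \wedge \|x \in u_j\|$ and distribute the meet with $a_i$ over the join, reducing the problem to showing, for each $j \in I$, that
$$a_i \wedge a_j \wedge \|x \in u_j\| \;\leq\; \|x \in u_i\|.$$
Here I would invoke the hypothesis $a_i \wedge a_j \leq \|u_i \approx u_j\|$ together with Leibniz's law applied to the formula $\phi(z) := (x \in z)$, which yields $\|u_j \approx u_i\| \leq \|x \in u_j\| \to \|x \in u_i\|$; using $\|u_j \approx u_i\| = \|u_i \approx u_j\|$ from Lemma 2.3(i) (really Lemma 3.3 in the Heyting setting used earlier) and residuation, we obtain the needed bound.

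The main obstacle, and the single conceptually non-trivial step, is clause (b): one must recognize that the compatibility hypothesis $a_i \wedge a_j \leq \|u_i \approx u_j\|$ is precisely what licenses transporting membership $\|x \in u_j\|$ across to $\|x \in u_i\|$, and this transportation is available only because Leibniz's law has been built into the semantics. Everything else is routine manipulation with the Heyting operations and the recursive clauses for $\|\cdot\|$.
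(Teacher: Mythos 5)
Your proof is correct and is essentially the argument the paper has in mind: the paper omits the details, remarking only that the proof is ``exactly the same as the intuitionistic case,'' and your two-part unfolding of $\|u_i \approx u\|$ with (a) handled by Lemma \ref{lema1}(ii) and the definition of the mixture, and (b) by distributing $a_i$ over the join and transporting $\|x\in u_j\|$ to $\|x\in u_i\|$ via the hypothesis and the Leibniz clause, is precisely that standard Heyting-valued argument. The only cosmetic difference is that in Bell's treatment the transport fact $\|u_i\approx u_j\|\wedge\|x\in u_j\|\leq\|x\in u_i\|$ is proved by induction on rank, whereas here you may legitimately read it off the Leibniz-law clause built into the semantics, which is exactly the point the paper is emphasizing.
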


 A set $B$ refines a set $A$ if for all $b\in B$ there is some $a\in A$ such that $b\leq a$. A Heyting algebra $H$ is refinable if every subset $A\subseteq H$ there exists some anti-chaim  $B$ in $H$ that refines $A$ and verifies $\bigvee A = \bigvee B$.

\begin{theo}
Let $A$ be a complete Heyting algebra such that $A$ is  refinable. If $\mathbf{V}^{A}\vDash \exists x \psi(x)$, then there is $u\in \mathbf{V}^{ A}$ such that $\mathbf{V}^{A}\vDash \psi(u)$. 
\end{theo}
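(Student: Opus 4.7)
The plan is to unfold the hypothesis via the definition of $||\exists x\,\psi(x)||$, use refinability to replace the resulting join by one over an antichain, choose a witness for each member of the antichain, and glue the witnesses together into a single name using the Mixing Lemma. In detail, the assumption $\mathbf{V}^{A}\vDash \exists x\,\psi(x)$ means
$$\bigvee_{v\in \mathbf{V}^{A}} ||\psi(v)|| \;=\; 1.$$
Let $S=\{||\psi(v)|| : v\in \mathbf{V}^{A}\}\subseteq A$. Since $A$ is refinable, there is an antichain $B\subseteq A$ refining $S$ with $\bigvee B = \bigvee S = 1$. For each $b\in B$, refinement gives some $v\in \mathbf{V}^{A}$ with $b\leq ||\psi(v)||$; fix one such witness and call it $u_b$.

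Next I would form the mixture $u = \sum_{b\in B} b\cdot u_b$ and verify the hypothesis of the Mixing Lemma. Reading "antichain" in the sense of pairwise disjoint (i.e.\ $b\wedge b' = 0$ whenever $b\neq b'$ in $B$), the condition $b\wedge b' \leq ||u_b = u_{b'}||$ is automatic, so the Mixing Lemma applies and yields
$$b \;\leq\; ||u_b \approx u|| \quad \text{for every } b\in B.$$

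Then I would invoke Leibniz's law from the definition of $||\cdot||$, namely $||u_b\approx u||\leq ||\psi(u_b)\to \psi(u)||$, together with the Heyting-algebra inequality $x\wedge (x\to y)\leq y$, to get
$$||u_b\approx u|| \wedge ||\psi(u_b)|| \;\leq\; ||\psi(u)||.$$
Combining this with $b\leq ||u_b\approx u||$ and $b\leq ||\psi(u_b)||$ gives $b\leq ||\psi(u)||$ for every $b\in B$. Taking joins over $B$ and using $\bigvee B = 1$ yields $||\psi(u)||=1$, i.e.\ $\mathbf{V}^{A}\vDash \psi(u)$, as required.

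The main obstacle I anticipate is the interpretation of "antichain'' in the refinability hypothesis: the argument above works smoothly only if antichains are taken to be pairwise disjoint (so that the Mixing Lemma's compatibility condition $b\wedge b'\leq ||u_b=u_{b'}||$ is free). If instead "antichain'' only means pairwise incomparable, an additional compatibility argument would be needed before the mixture is well-behaved. A minor secondary point is that constructing the family $\{u_b\}_{b\in B}$ requires a choice principle at the meta-level, which is standard in this framework since we already work inside a fixed model $\mathbf{V}$ of set theory.
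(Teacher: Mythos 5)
Your proof is correct and follows exactly the route the paper sets up (the theorem is stated there without an explicit proof, but the mixture construction, the Mixing Lemma and the refinability definition immediately preceding it are clearly intended to be combined as you do, together with the Leibniz-law clause $||u\approx v||\leq ||\phi(u)\to\phi(v)||$ built into the Section 4 valuation). Your reading of ``antichain'' as pairwise disjoint ($b\wedge b'=0$ for $b\neq b'$) is the right one in this Boolean/Heyting-valued-model context, so the compatibility hypothesis of the Mixing Lemma is indeed automatic and no further argument is needed.
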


Now, given a complete Heyting $ A'$ of $ A$, we have the associated models $\mathbf{V}^{\bf A'}$ and $\mathbf{V}^{\bf A}$. Then, it is easy to see that $\mathbf{V}^{\bf A'}\subseteq \mathbf{V}^{\bf A}$.

On the other hand, we say that a formula $\psi$ is {\em restricted} if all quantifiers are of the form $\exists y\in x$ or $\forall y\in x$, then we have  

\begin{lem}
For any complete Heyting algebra $ \bf A'$ of $ \bf A$ and any restricted negation-free formula $\psi(x_1,\cdots,x_n)$ with variables in  $\mathbf{V}^{ \bf A'}$ the equality $||\psi(x_1,\cdots,x_n)||^{ \bf A'}=||\psi(x_1,\cdots,x_n)||^{A}$.
\end{lem}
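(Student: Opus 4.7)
The plan is to prove the equality by structural induction on the formula $\psi$, with a preliminary simultaneous transfinite induction handling the atomic cases. The background fact that makes the two truth assignments $||\cdot||^{A'}$ and $||\cdot||^{A}$ comparable at all is that $A'$ is a complete Heyting subalgebra of $A$, so finite and arbitrary meets and joins together with the relative implication $\to$ agree on elements of $A'$ whether computed in $A'$ or in $A$. Consequently the whole argument reduces to checking that every subexpression appearing in the defining clauses of $||\psi||$ is built from data indexed by $\mathbf{V}^{A'}$ with coefficients lying in $A'$.

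For the atomic clauses I would fix $u,v \in \mathbf{V}^{A'}$ and argue by transfinite induction on $\max(rank(u),rank(v))$ that $||u \in v||^{A'} = ||u \in v||^{A}$ and $||u \approx v||^{A'} = ||u \approx v||^{A}$ hold simultaneously. The defining clauses
$$||u \in v|| = \bigvee_{x \in dom(v)} (v(x) \wedge ||x \approx u||)$$
and
$$||u \approx v|| = \bigwedge_{x \in dom(u)} (u(x) \to ||x \in v||) \wedge \bigwedge_{x \in dom(v)} (v(x) \to ||x \in u||)$$
only range over $x \in dom(u) \cup dom(v) \subseteq \mathbf{V}^{A'}$ of strictly smaller rank, with coefficients $u(x), v(x) \in A'$. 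By the inductive hypothesis the inner atomic values lie in $A'$ and coincide in the two models; the outer arbitrary joins and meets, indexed by subsets of $A'$, therefore agree under the subalgebra inclusion.

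For the inductive step on formula complexity, the connective cases $\wedge, \vee, \to$ are immediate from the induction hypothesis together with preservation of these operations by $A'$ inside $A$. For the bounded quantifiers, which are the only ones present since $\psi$ is restricted, I would invoke the preceding bounded-quantifier lemma to rewrite $||\exists x \in u\, \varphi(x)|| = \bigvee_{x \in dom(u)} (u(x) \wedge ||\varphi(x)||)$ and $||\forall x \in u\, \varphi(x)|| = \bigwedge_{x \in dom(u)} (u(x) \to ||\varphi(x)||)$, with $u \in \mathbf{V}^{A'}$. Since $dom(u) \subseteq \mathbf{V}^{A'}$ the indexing set is the same on both sides; the induction hypothesis gives $||\varphi(x)||^{A'} = ||\varphi(x)||^{A} \in A'$ for every $x \in dom(u)$, and completeness of the embedding again yields agreement of the joins and meets.

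The main obstacle I expect is the atomic base case, because $||u \in v||$ and $||u \approx v||$ are mutually recursive; setting up the simultaneous rank induction cleanly and checking that every intermediate value actually lies in $A'$, rather than merely in $A$, is the delicate point and is exactly where the hypothesis that $A'$ is a \emph{complete} subalgebra of $A$ is used. The restriction to negation-free $\psi$ is also essential, since the clause $||\neg\varphi|| = ||\varphi||^{\ast}$ involves a pseudo-complement on the ambient algebra which is not in general preserved under complete subalgebras; once that clause is excluded, the rest of the argument is a routine structural induction.
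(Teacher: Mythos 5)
Your proposal follows essentially the route the paper itself has in mind: the paper gives no explicit proof of this lemma, remarking (as with the adjacent lemma on the map $\hat{\cdot}$) that such statements carry over from the intuitionistic/Boolean-valued case because only restricted negation-free formulas are involved, and that standard argument is precisely your structural induction — an atomic base case by simultaneous rank induction, the connective cases by the fact that the inclusion $A'\subseteq A$ preserves $\wedge,\vee,\to$, and the bounded-quantifier cases via the preceding lemma, which makes both sides joins/meets indexed by $dom(u)\subseteq \mathbf{V}^{A'}$ with coefficients in $A'$; your reading of ``complete subalgebra'' as requiring the inclusion to preserve arbitrary joins and meets is indeed what is needed. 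One technical point should be repaired: transfinite induction on $\max(rank(u),rank(v))$ does not strictly decrease through the mutual recursion of $\in$ and $\approx$ — for instance $||u\in v||$ calls $||x\approx u||$ with $x\in dom(v)$, and when $rank(u)\geq rank(v)$ the maximum of the two ranks is unchanged, so the instances you need are not covered by that induction hypothesis. Inducting instead on the lexicographically ordered pair $(\max,\min)$ of the ranks, or on their natural (Hessenberg) sum, makes the recursion well-founded, and with that adjustment your argument goes through as written.
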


\

Next, we are going to consider the Boolean algebra ${\bf 2}=(\{0,1\},\wedge,\vee,\neg,0,1)$ and the natural mapping $\hat{\cdot }:  \mathbf{V}^{\bf A} \to \mathbf{V}^{ {\bf 2} }$ defined by $\hat{u}=\{\langle \hat{v}, 1\rangle: v\in u\}$. This is well defined by recursion on $v\in dom(u)$. Then, we have the following lemma holds:

\begin{lem}\label{tecnico}
\begin{itemize}
\item[\rm (i)] $|| u\in \hat{v}||=\bigvee\limits_{x\in v} || u=\hat{x} ||$ for all $v\in \mathbf{V}$ and $u\in \mathbf{V}^{\bf A }$,

\item[\rm (ii)] $u\in v \leftrightarrow \mathbf{V}^{\bf A } \vDash \hat{u}\in \hat{v}$ and $u=v \leftrightarrow \mathbf{V}^{\bf A } \vDash \hat{u}= \hat{v}$,

\item[\rm (iii)] for all $x\in \mathbf{V}^{ {\bf 2} }$ there exists a unique $v\in \mathbf{V}$ such that $\mathbf{V}^{ {\bf 2}}\vDash  x= \hat{v}$,

\item[\rm (iv)] for any formula negation-free formula $\psi(x_1,\cdots,x_n)$  and any $x_1, \cdots,x_n\in \mathbf{V}$, we have  $\psi(x_1,\cdots,x_n) \leftrightarrow \mathbf{V}^{ {\bf 2} } \vDash   \psi(\hat{x_1},\cdots,\hat{x_n})$. Moreover for any restricted negation-free formula $\phi$, we have $\phi(x_1,\cdots,x_n) \leftrightarrow \mathbf{V}^{\bf A} \vDash   \phi(\hat{x_1},\cdots,\hat{x_n})$. 

\end{itemize}

\end{lem}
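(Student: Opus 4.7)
The plan is to prove the four items in sequence, each building on the previous ones; the unifying tool is the meta-induction principle on ranks combined with the explicit description $\mathrm{dom}(\hat{v})=\{\hat{x}:x\in v\}$ together with $\hat{v}(\hat{x})=1$. Item (i) is immediate from this: unfolding $\|u\in\hat{v}\|$, the join runs over $\mathrm{dom}(\hat{v})$ and every weight equals $1$, so $\|u\in\hat{v}\|=\bigvee_{x\in v}(1\wedge\|\hat{x}\approx u\|)=\bigvee_{x\in v}\|u\approx\hat{x}\|$, with symmetry of $\approx$ supplied by Lemma \ref{lema1}(i). For (ii), which is the heart of the proof, I would run a simultaneous $\in$-induction on $u,v\in\mathbf{V}$ under the strengthened invariant that $\|\hat{u}\in\hat{v}\|$ and $\|\hat{u}=\hat{v}\|$ each lie in $\{0,1\}\subseteq A$, and equal $1$ precisely when $u\in v$, respectively when $u=v$. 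Once this invariant is in force, the forward direction is transparent (using (i) and $\|\hat{x}\approx\hat{x}\|=1$), and the backward direction combines the fact that a supremum of $\{0,1\}$-valued elements reaches $1$ iff some summand equals $1$ with extensionality in $\mathbf{V}$ and the inductive hypothesis.

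Item (iii) is then a rank induction on $x\in\mathbf{V}^{{\bf 2}}$: by IH each $y\in\mathrm{dom}(x)$ has a unique $w_y\in\mathbf{V}$ with $\mathbf{V}^{{\bf 2}}\vDash y=\hat{w_y}$, and setting $v=\{w_y:y\in\mathrm{dom}(x),\,x(y)=1\}$ a direct computation (exploiting $\mathrm{ran}(x)\subseteq\{0,1\}$) shows $\|x=\hat{v}\|^{{\bf 2}}=1$; uniqueness is immediate from (ii) applied with ${\bf A}={\bf 2}$. For (iv), I would induct on the complexity of $\psi$: atomic cases reduce to (ii), propositional connectives are routine over the Boolean algebra $\{0,1\}$, and in the existential case, $\|\exists x\,\psi\|^{{\bf 2}}=1$ forces some $u\in\mathbf{V}^{{\bf 2}}$ with $\|\psi(u)\|^{{\bf 2}}=1$, which I would convert via (iii) into a canonical witness $\hat{w}$ by propagating the truth value through the Leibniz inequality $\|u=\hat{w}\|\wedge\|\psi(u)\|\leq\|\psi(\hat{w})\|$ already built into the definition of $\|\cdot\|$. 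The universal case is dual. The restricted negation-free clause over $\mathbf{V}^{{\bf A}}$ then follows by invoking the preceding submodel lemma with ${\bf A}'={\bf 2}$: the truth value of $\phi$ coincides in both models, so the equivalence established over $\mathbf{V}^{{\bf 2}}$ lifts to $\mathbf{V}^{{\bf A}}$.

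The main obstacle I anticipate is sustaining the strengthened invariant in (ii) through the induction inside a general complete Heyting algebra: in contrast with the two-element Boolean case, ``supremum-reaches-$1$'' does not force a summand to equal $1$ unless one already knows the summands are $\{0,1\}$-valued, so the stronger inductive hypothesis is indispensable and must be tracked simultaneously for $\in$ and $=$. Everything else in the argument is essentially bookkeeping on top of this structural fact.
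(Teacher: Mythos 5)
Your proposal is correct and follows exactly the route the paper intends: the paper gives no explicit argument, merely noting that the proof "is the same as in the intuitionistic case," and your sketch is precisely that standard argument — the simultaneous $\in$-induction with the strengthened $\{0,1\}$-valued invariant for (ii), rank induction plus uniqueness via (ii) for (iii), induction on complexity with (iii) and the built-in Leibniz inequality for (iv), and the preceding submodel lemma with ${\bf A}'={\bf 2}$ for the restricted clause. No gaps worth flagging; your identification of the $\{0,1\}$-valuedness of $\|\hat{u}\in\hat{v}\|$ and $\|\hat{u}=\hat{v}\|$ as the key point is exactly what makes the argument go through in an arbitrary complete Heyting algebra.
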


The proof of the last theorem is the same for intuitionistic case because we consider restricted negation-free formulas and it will be used to prove the validity of axiom Infinity.

\subsection{ Validating axioms}

New, we are going to prove the validity of several set-theoretical axioms of  {\bf ZF} and let us consider a fix model $\mathbf{V}^{ A}$. Then:

 \

\noindent {\bf Pairing} 

Let $u,v\in \mathbf{V}^{\bf A}$ and consider the function $w=\{\langle u,1\rangle, \langle v,1\rangle \}$. Thus, we have that $|| z\in w||= (w(u)\wedge ||z=u||) \vee (w(v)\wedge ||z=v||)=  ||z=u||\vee ||z=v||= ||z=u\vee z=v||$.

\

\noindent {\bf Union} 

Given $u\in \mathbf{V}^{\bf A}$ and consider tha function $w$ with $dom(w)= \bigcup\limits_{v\in dom(u)} dom(v)$ and $w(x)= \bigvee\limits_{v\in A_x} v(x)$ where $A_x=\{v\in dom(u): x\in dom(v)\}$. Then,

\begin{eqnarray*}
 ||y\in w||  &= &\bigvee\limits_{x\in dom(w)} (||x=y|| \wedge \bigvee\limits_{v\in A_x} v(x))\\
&= & \bigvee\limits_{x\in dom(w)}  \bigvee\limits_{v\in A_x} (||x=y|| \wedge v(x) )  \\
&= & \bigvee\limits_{v\in dom(u)}  \bigvee\limits_{x\in dom(v)}(||x=y|| \wedge v(x) ) \\
&= & || \exists v\in u(y\in v)||.\\
\end{eqnarray*}

\

\noindent {\bf Separation}

Given $u\in \mathbf{V}^{\bf  A}$ and suppose $dom(w)=dom(u)$ and $w(x)=||x\in u||\wedge ||\phi(x)||$ then

\begin{eqnarray*}
  || z\in w|| &= & \bigvee\limits_{x\in dom(w)} (||y\in w||\wedge ||\phi(y)||\wedge ||y=z||)\\
&\leq & \bigvee\limits_{x\in dom(w)} (||\phi(z)||\wedge ||y=z||).  \\
\end{eqnarray*}

Besides, 

\begin{eqnarray*}
 ||\phi(z)||\wedge ||y=z|| &= &\bigvee\limits_{y\in dom(u)} (u(y)\wedge ||z=y|| \wedge ||\phi(z)||)\\
&\leq & \bigvee\limits_{y\in dom(u)} (||y\in u||\wedge ||z=y|| \wedge ||\phi(y)||)  \\
&= & \bigvee\limits_{y\in dom(u)} (w(y) \wedge ||z=y||) = ||z\in w||. \\
\end{eqnarray*}

\

\noindent {\bf Infinity} 

Assume the formula $\psi(x)$ is $\emptyset\in x \wedge \forall y\in x (y^+\in x)$. Then, the axiom in question is the sentence $\exists x \psi(x)$. Now, it is clear that the negation-free formula $\emptyset\in x \wedge \forall y\in x (y^+\in x)$ is restricted and certainly $\psi(\omega)$ is true. Hence, by Lemma \ref{tecnico} (iv), we get $||\psi(\hat{\omega})||=1$, and so, $||\exists x \psi(x)||=1$.

\

\noindent {\bf  Collection} 

Given $u\in \mathbf{V}^{\bf A }$ and $x\in dom(u)$ there exists by Axiom of Choice some ordinal $\alpha_x$ such that $\bigvee\limits_{y \in \mathbf{V}^{\bf A}} ||\phi(x,y)||= \bigvee\limits_{y \in \mathbf{V}^{\bf A}_{\alpha_x}} ||\phi(x,y)||$. For $\alpha=\{\alpha_x: x\in dom(u)\}$ and $v$ the function with domain  $\mathbf{V}^{\bf A }$ and range $\{1\}$, we have

\begin{eqnarray*}
  ||\forall x\in u\exists y \phi(x,y)  ||  &= & \bigwedge\limits_{x\in dom(u)} (u(x)\to \bigvee\limits_{y\in \mathbf{V}^{\langle A, N\rangle}} ||\phi(x,y)  ||) \\
&= & \bigwedge\limits_{x\in dom(u)} (u(x)\to \bigvee\limits_{y\in \mathbf{V}^{\langle A, N\rangle}_\alpha} ||\phi(x,y)  ||) \\
&=  & \bigwedge\limits_{x\in dom(u)}(u(x) \to ||\exists y\in v \phi(x,y)  ||) \\
 \\
 &= & ||\forall x\in u \exists y\in v \phi(x,y) ||    \\
 &\leq & ||\exists w \forall x\in u \exists y\in w \phi(x,y) ||. \\
\end{eqnarray*}

\section{First-order of the paracosinsistent Nelson's logic} 

Paracosinsistent Nelson's logic, for short PNL, was studied systematically by Odintsov. For more details and information of the issue the reader can consult Odintsov's book \cite{Od-08}. In the paper \cite{SA}, Akama considered at the first time the PNL in 1999. 

In this part of the paper, we shall present $F$-structures as semantics for first-order version of paracosinsistent Nelson's logic. First, assume the propositional signature propositional languages ${\cal L} = \{\vee, \wedge, \to, \neg, \bot\}$, where $\neg$ is a symbol for strong negation  as well as, the symbol $\forall$, universal quantifier, and $\exists$, existential quantifier,  together with punctuation marks, commas and parentheses. Besides, let $Var=\{v_1,v_2,...\}$ be a numerable set of individual variables. A first-order signature $\Theta$ is also composted by the pair $\langle\P,\F\rangle$, where $\P$ denotes a non-empty set of predicate symbols and $\F$ is a set of function symbols. The notions of bound and free variables, closed terms, sentences, and substitutability are  defined as usual. We denote by $\mathfrak{Fm}_\Theta$ over the set of all formulas of $\Theta$ and by $Ter$ the absolutely free algebra of terms. Sometimes, we say that $\mathfrak{Fm}_\Theta$ is the language over $\Theta$. 
By $\varphi(x/t)$ we denote the formula that results from $\varphi$ by replacing simultaneously all the free occurrences of the variable $x$ by the term $t$. The connectives of equivalence $\leftrightarrow$  and of strong equivalence
$\Leftrightarrow$ are defined as follows: $\psi \leftrightarrow \phi := (\psi \leftrightarrow \phi) \wedge (\phi \leftrightarrow \psi)$,  $\psi \Leftrightarrow \phi := (\psi \leftrightarrow \phi) \wedge (\neg \psi \leftrightarrow \neg\phi)$. As above, logics will be defined via Hilbert-style deductive systems with only the rules of substitution and modus ponens. In this way, to define a logic it is enough to give its axioms. First-order version of paraconsistent Nelson’s logic N4, for short QN4, is a logic in the language ${\cal L}$ characterized by the following list of axioms:

\noindent {\bf{Axioms}}

\begin{itemize}
  \item [(N1)] $\alpha\to(\beta\to \alpha)$,
  \item [(N2)] $(\alpha\to(\beta\to \gamma))\to((\alpha\to \beta)\to(\alpha\to \gamma))$,
  
  \item [(N3)] $(\alpha\wedge \beta)\to \beta$,  
  \item [(N4)] $(\alpha\wedge\beta)\to\alpha$,
  \item [(N5)] $\alpha\to(\beta\to(\alpha\wedge\beta))$,
  \item [(N6)] $\alpha\to(\alpha\vee\beta)$,
  \item [(N7)] $\beta\to(\alpha\vee\beta)$,
  \item [(N8)] $(\alpha\to\gamma)\to((\beta\to\gamma)\to((\alpha\vee\beta)\to\gamma))$,
  \item[(N9)] $\sim(\alpha\to \beta)\leftrightarrow \alpha\,\wedge\sim \beta$,
  \item[(N10)] $\sim(\alpha\wedge\beta)\leftrightarrow\,\sim\alpha\,\vee\sim\beta$,
  \item[(N11)] $\sim(\alpha\vee\beta)\leftrightarrow\,\sim\alpha\,\wedge\sim\beta$,
  \item[(N12)] $\sim(\neg\alpha)\leftrightarrow\,\alpha$,
  \item[(N13)] $\sim(\sim\alpha)\leftrightarrow\,\alpha$,  
  
   \item[(A1)] $\varphi(x/t)\to\exists x\varphi$, if $t$ is a term free for $x$ in $\varphi$,
  \item[(A2)] $\forall x\varphi\to\varphi(x/t)$, if $t$ is a term free for $x$ in $\varphi$,
\end{itemize}

\noindent{\bf Inference rules}

\begin{itemize}
  \item[(MP)] $\dfrac{\alpha, \alpha\to \beta}{\beta}$, 
   \item[(R3)] $\dfrac{\alpha\to\beta}{\exists x\alpha\to\beta}$, and $x$ does not occur free in $\beta$,
  \item[(R4)] $\dfrac{\alpha\to\beta}{\alpha\to\forall x\beta}$, and $x$ does not occur free in $\alpha$.
 
\end{itemize}

It is worth mentioning that in the propositional setting if we take the axioms from (N1) to (N13) with the rule {\em modus ponens}  we have the propositional logic N4. Besides, if we add the axiom (N14) $\sim\alpha\to(\alpha\to \beta)$  we have Nelson logics that  is known as N3, see \cite[Section 8.2]{Od-08}. Now, we introduce a class of $F$-structures that will serve as semantics for QN4. First, recall that Fidel presented for the first time $F$-structures as semantics for N3 in \cite{F2}.

Now, recall that an algebra   $\mathcal{A}=\langle A,\vee,\wedge,\to,0,1\rangle$ is said to be a {\em Heyting algebra} if the reduct $\langle A,\vee,\wedge,0,1\rangle$  is a bounded distributive lattice and the condition $x\wedge y\leq z$ iff $x \leq y\to z$ ($\ast$) holds.  Besides, the algebra $\mathcal{A}=\langle A,\vee,\wedge,\to,1\rangle$ is said to be  {\em generalized Heyting} algebra  if the reduct $\mathcal{A}=\langle A,\vee,\wedge\rangle$ it is a distributive lattice and  $\ast$ is verified.

\begin{defi}\label{defestruc} A $F$-structure for N4  is a  system $\langle A,\{N_x\}_{x\in A}\rangle$ where $A$ is a generalized Heyting algebra and $\{N_x\}_{x\in A}$ is a family of set of $A$ such that the following conditions hold:
\begin{itemize}
  \item[\rm (i)] for any $x\in A$, $N_x\not=\emptyset$,
  \item[\rm (ii)] for any $x,y\in A$, $x'\in N_x$ and $y'\in N_y$, the following relations hold $x'\vee y'\in N_{x\wedge y}$ and $x'\wedge y'\in N_{x \vee y}$, $x\in N_{x'}$,
  \item[\rm (iii)] for any $x,y\in A$, $y'\in N_x$, we have $x\wedge y' \in N_{x\to y}$.
\end{itemize}

\end{defi}

We are going to use the convention of algebraic logic,  we will write sometimes $\langle A,N\rangle$ instead of $\langle A,\{N_x\}_{x\in A}\rangle$. Besides, we call the  $F$-structures for N4 by N4-structures. As example of N4-structure, we can take a generalized Heyting algebra $A$ and the set $N_x^s=\{y\in A:x\vee y=1\}$. The structure $\langle A,\{N_x^s\}_{x\in A}\rangle$ will be said to be a saturated N4-structure.


The N4-structure $\langle A,\{N_x\}_{x\in A}\rangle$ is said to be a substructure of the N4-structure $\langle B,\{N'_x\}_{x\in B}\rangle$ if  $A$ is a subalgebra of  $B$ and $N_x\subseteq N'_x$ holds for $x\in A$. It is easy to see all N4-structure $\langle A,\{N_x\}\rangle$ is a substructure of $\langle A,\{N_x^s\}\rangle$ defined before.

\begin{defi} A $\Theta$-structure $\mathfrak{A}$ is a pair $(\langle A,\{N_x\}_{x\in A}\rangle,\S ) $ where $\langle A,\{N_x\}_{x\in A}\rangle$ is a completed N4-structure; i.e., $A$ is a completed generalized Heyting algebra. Besides, $\S=\langle S,\{P_{\S}\}_{P\in\P},\{f_{\S}\}_{f\in\F}\rangle$ is composted by  a non-empty domain $S$,  a function $P_{\S}:S^n\to \langle A,\{N_x\}_{x\in A}\rangle$, for each $n$-ary predicate symbol $P\in\P$, and  a function $f_{\S}:S^n\to S$, for each $n$-ary function symbol $f\in\F$.
\end{defi}

We are going to consider the usual notion of derivation of a formula $\alpha$ form $\Gamma$ in QN4 and we denote by  $\Gamma\vdash\alpha$. Besides, for a given $\Theta$-structure $\mathfrak{A}= (\langle A,\{N_x\}_{x\in A}\rangle ,\S)$, we say that a  mapping $v:Var\to S$ is   a $\mathfrak{A}$-valuation, or simply a valuation. By $v[x\to a]$ we denote the $\mathfrak{A}$-valuation where $v[x\to a](x)=a$ and $v[x\to a](y)=v(y)$ for any $y\in Var$ such that $y\neq x$.

\begin{defi} \label{estructura} Let $\mathfrak{A}=( \langle A,\{N_x\}_{x\in A}\rangle,\S)$ be a $\Theta$-structure and $v$ a $\mathfrak{A}$-valuation from $Var$ into $S$. We define the truth values of the terms and the formulas in $\mathfrak{A}$ for a valuation $v$ as follows:

\begin{center}

$||x||^\mathfrak{A}_v=v(x)$,\\ [2mm]

$||f(t_1,\cdots ,t_n)||^\mathfrak{A}_v=f_{\S}(||t_1||^\mathfrak{A}_v,\cdots ,||t_n||^\mathfrak{A}_v)$, for any $f\in\F$,\\ [2mm]

$||P(t_1,...,t_n)||^\mathfrak{A}_v=P_{\S}(||t_1||^\mathfrak{A}_v,...,||t_n||^\mathfrak{A}_v)$, for any $P\in\P$,\\ [2mm]

$||\varphi \# \psi||^\mathfrak{A}  = ||\varphi||^\mathfrak{A}  \# ||\psi||^\mathfrak{A} $, for every $\#\in \{\wedge,\vee, \to\}$,\\[2mm]

$||\neg \alpha||_v^\mathfrak{A}\in N_{||\alpha||_v^\mathfrak{A}}$ and $||\neg\neg\alpha||_v^\mathfrak{A}=||\alpha||_v^\mathfrak{A}$,\\ [2mm]

$||\neg (\alpha\vee \beta)||_v^\mathfrak{A}=  ||\neg \alpha||_v^\mathfrak{A} \wedge ||\neg \beta||_v^\mathfrak{A}$ and $||\neg(\alpha \wedge \beta)||_v^\mathfrak{A}=||\neg \alpha||_v^\mathfrak{A}\vee ||\neg \beta||_v^\mathfrak{A} $,\\ [2mm]

$||\neg(\alpha\to \beta)||_v^\mathfrak{A}=||\alpha||_v^\mathfrak{A}\wedge||\neg \beta||_v^\mathfrak{A} $,\\ [2mm]

$||\forall x\alpha||^\mathfrak{A}_v=\underset{a\in S}{\bigwedge} ||\alpha||^\mathfrak{A}_{v[x\to a]}$,\\ [2mm]

$||\exists x\alpha||^\mathfrak{A}_v=\underset{a\in S}{\bigvee}||\alpha||^\mathfrak{A}_{v[x\to a]}$, \\ [2mm]

$|| \alpha(x/t)||^\mathfrak{A}_v=||\alpha||^\mathfrak{A}_{v[x \to||t_1||^\mathfrak{A}_v]}$,  if $t$ is a term free for $x$ in $\varphi$. 

\end{center}
\end{defi}

\

It worth mentioning that the {\em substitution condition} $ ||\varphi (x/t) ||^\mathfrak{A}_v= ||\varphi ||^\mathfrak{A}_{v[x\to ||t||_v^\mathfrak{A} ]}$ can be proved for first order algebrizable logics. In our setting using $F$-structures for QN4 the negation-free formulas works exactly as the algebrizable case and the sustitution conditions holds, but for the atomic formulas with negation do not have an interpretation associated of them. Hence,  we need to impose the {\em substitution condition} as axiom as it was done for for da Costa's non-algebrizable paracosnistent logic $C_\omega$ in \cite{FOS2}.

\

Now, we say that $\mathfrak{A}$ and $v$ {\em satisfy} a formula $\varphi$, denoted by $\mathfrak{A}\vDash\varphi[v]$, if $||\varphi||^\mathfrak{A}_v=1$. Besides, we say that $\varphi$ is {\em true} $\mathfrak{A}$ if $||\varphi||^\mathfrak{A}_v=1$ for each  a $\mathfrak{A}$-valuation $v$ and we denote by $\mathfrak{A}\vDash\varphi$. We say that $\varphi$ is a {\em semantical consequence} of $\Gamma$ in QN4, if, for any structure $\mathfrak{A}$: if  $\mathfrak{A}\vDash\gamma$  for each $\gamma\in\Gamma$, then $\mathfrak{A}\vDash\varphi$. For a given set of formulas $\Gamma$, we say that the structure $\mathfrak{A}$  is a {\em model} of $\Gamma$ iff $\mathfrak{A}\vDash\gamma$  for each $\gamma\in\Gamma$.

Recall that  a logic  defined over a  language ${\cal S}$ is a system $\mathcal{L}=\langle For, \vdash\rangle$ where $For$ is the set of formulas over ${\cal S}$ and the relation  $\vdash \subseteq {\cal P} (For) \times For$ ( ${\cal P}(A)$ is the set of all subsets of $A$). The logic $\mathcal{L}$ is said to be a tarskian if it satisfies the following properties, for every set  $\Gamma\cup\Omega\cup\{\varphi,\beta\}$ of formulas:
\begin{itemize}	
  \item [\rm (1)] if $\alpha\in\Gamma$, then $\Gamma\vdash\alpha$,
  \item [\rm (2)] if $\Gamma\vdash\alpha$ and $\Gamma\subseteq\Omega$, then $\Omega\vdash\alpha$,
  \item [\rm (3)] if $\Omega\vdash\alpha$ and $\Gamma\vdash\beta$ for every $\beta\in\Omega$, then $\Gamma\vdash\alpha$.
\end{itemize}

\noindent A logic $\mathcal{L}$ is said to be finitary if it satisfies the following:

\begin{itemize}
  \item [\rm (4)] if $\Gamma\vdash\alpha$, then there exists a finite subset $\Gamma_0$ of $\Gamma$ such that $\Gamma_0\vdash\alpha$.
\end{itemize}

\begin{defi} \label{maxi} Let $\mathcal{L}$ be a tarskian logic and let $\Gamma\cup\{\varphi\}$ be a set of formulas, we say that $\Gamma$ is a theory. Besides,  $\Gamma$ is said to be a consistent theory if there is $\varphi$ such that $\Gamma\not\vdash_{\mathcal{L}}\varphi$. Besides, we say that $\Gamma$ is a maximal consistent theory if  $\Gamma,\psi\vdash_{\mathcal{L}}\varphi$ for any $\psi\notin\Gamma$ and in this case, we say $\Gamma$ non-trivial maximal respect to $\varphi$.
\end{defi}

A set of formulas $\Gamma$ is closed in $\mathcal{L}$ if the following property holds for every formula $\varphi$: $\Gamma\vdash_{\mathcal{L}}\varphi$ if and only if $\varphi\in\Gamma$. It is easy to see that any maximal consistent theory is closed one.

\begin{lem}[Lindenbaum-\L os] \label{exmaxnotr} Let $\mathcal{L}$ be a tarskian and finitary logic. Let $\Gamma\cup\{\varphi\}$ be a set of formulas  such that $\Gamma\not\vdash\varphi$. Then, there exists a set of formulas $\Omega$ such that $\Gamma\subseteq\Omega$ with $\Omega$ maximal non-trivial with respect to $\varphi$ in $\mathcal{L}$.
\end{lem}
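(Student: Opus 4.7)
The plan is to prove this standard Lindenbaum-style theorem by a Zorn's lemma argument on the poset of theories extending $\Gamma$ that still fail to derive $\varphi$. This is the classical proof; the tarskian and finitary hypotheses are exactly what one needs at two precise points.

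First, I would define
\[
\mathcal{F} \;=\; \{\Delta \subseteq For \;:\; \Gamma \subseteq \Delta \text{ and } \Delta \not\vdash_{\mathcal{L}} \varphi\},
\]
which is nonempty since $\Gamma \in \mathcal{F}$ by hypothesis. I would order $\mathcal{F}$ by set-theoretic inclusion and aim to apply Zorn's lemma, so the next task is to show every chain $\mathcal{C} \subseteq \mathcal{F}$ admits an upper bound in $\mathcal{F}$. The natural candidate is $\Delta^\ast = \bigcup_{\Delta \in \mathcal{C}} \Delta$. Clearly $\Gamma \subseteq \Delta^\ast$, so the key point is $\Delta^\ast \not\vdash_{\mathcal{L}} \varphi$.

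This is exactly where finitariness enters, and it is the main (and essentially only) subtle step. Suppose for contradiction that $\Delta^\ast \vdash_{\mathcal{L}} \varphi$. By property (4), there is a finite $\Delta_0 \subseteq \Delta^\ast$ with $\Delta_0 \vdash_{\mathcal{L}} \varphi$. Since $\Delta_0$ is finite and $\mathcal{C}$ is a chain, each element of $\Delta_0$ belongs to some member of $\mathcal{C}$, and by chain-comparability there is a single $\Delta' \in \mathcal{C}$ containing all of $\Delta_0$. Using tarskian property (2) (monotonicity), $\Delta' \vdash_{\mathcal{L}} \varphi$, contradicting $\Delta' \in \mathcal{F}$. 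Hence $\Delta^\ast \in \mathcal{F}$ and bounds $\mathcal{C}$.

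Zorn's lemma then yields a maximal element $\Omega \in \mathcal{F}$. It remains to check that $\Omega$ is maximal non-trivial with respect to $\varphi$ in the sense of Definition~\ref{maxi}, i.e.\ that $\Omega,\psi \vdash_{\mathcal{L}} \varphi$ for every $\psi \notin \Omega$. For such $\psi$, the set $\Omega \cup \{\psi\}$ strictly contains $\Omega$ and extends $\Gamma$; by maximality of $\Omega$ in $\mathcal{F}$, it cannot lie in $\mathcal{F}$, so $\Omega \cup \{\psi\} \vdash_{\mathcal{L}} \varphi$, which is exactly the required condition. Note that $\Omega \not\vdash_{\mathcal{L}} \varphi$ (since $\Omega \in \mathcal{F}$) also guarantees $\Omega$ is consistent in the sense of Definition~\ref{maxi}. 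The only conceptually nontrivial point in the whole argument is the use of finitariness to pass to the union of a chain; everything else is a routine Zorn's lemma bookkeeping exercise.
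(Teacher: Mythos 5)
Your proof is correct: the Zorn's lemma argument on the family of extensions of $\Gamma$ not deriving $\varphi$, with finitariness (property (4)) and monotonicity (property (2)) used exactly where you invoke them, is the standard proof of this Lindenbaum--\L os lemma, and your final verification that a maximal element of the family is maximal non-trivial with respect to $\varphi$ in the sense of Definition \ref{maxi} is also right. The paper itself gives no argument at all, merely citing W\'ojcicki's Theorem 2.22, so your proposal simply supplies the standard proof that the citation refers to.
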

\begin{proof}
It can be found \cite[Theorem 2.22]{W}.
\end{proof}

\

It is clear that QN4 is a finitary and tarskian logic. So, we are in conditions to show the following adequacy theorem. First, we can observe that for given a formula $\varphi$ and suppose $\{x_1,\cdots,x_n\}$ is the set of variable of $\varphi$, the {\em universal closure} of $\varphi$ is defined by $\forall x_1\cdots \forall x_n\varphi$. Thus, it is clear that if $\varphi$ is a sentence then the universal closure of $\varphi$ is itself.

\begin{theo}\label{compleprimord} Let $\Gamma\cup\{\varphi\}\subseteq\mathfrak{Fm}_\Theta$. Then, $\Gamma\vdash\varphi$  iff $\Gamma\vDash\varphi$.
\end{theo}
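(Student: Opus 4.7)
The plan is to prove the two directions of the adequacy theorem separately. For soundness ($\Gamma\vdash\varphi$ implies $\Gamma\vDash\varphi$), I would proceed by induction on the length of a QN4-derivation. Fix an arbitrary $\Theta$-structure $\mathfrak{A}=(\langle A,\{N_x\}_{x\in A}\rangle,\S)$ and a valuation $v$. The positive axioms (N1)--(N8) receive value $1$ purely from the Heyting-algebra identities built into the clauses of Definition \ref{estructura}. Axioms (N9)--(N11) follow directly from the recursive clauses for $||\neg(\alpha\wedge\beta)||$, $||\neg(\alpha\vee\beta)||$ and $||\neg(\alpha\to\beta)||$, while (N12) and (N13) reduce to the stipulations $||\neg\neg\alpha||_v^{\mathfrak{A}}=||\alpha||_v^{\mathfrak{A}}$ and $||\neg\alpha||_v^{\mathfrak{A}}\in N_{||\alpha||_v^{\mathfrak{A}}}$ together with condition (ii) of Definition \ref{defestruc}. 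Axioms (A1) and (A2) use the substitution condition stated immediately after Definition \ref{estructura}, combined with the fact that $||\exists x\alpha||_v^{\mathfrak{A}}$ and $||\forall x\alpha||_v^{\mathfrak{A}}$ are a supremum and an infimum indexed over $a\in S$. Rules MP, R3, R4 are then immediate from the adjunction in a complete Heyting algebra, using the side-condition on free variables to keep the fixed side of each implication invariant under $v[x\to a]$.

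For completeness ($\Gamma\vDash\varphi$ implies $\Gamma\vdash\varphi$), I would argue contrapositively, adapting the Henkin--Fidel strategy used for $C_\omega$ in \cite{FOS2}. Assume $\Gamma\not\vdash\varphi$. Replacing each formula by its universal closure, I may suppose $\Gamma\cup\{\varphi\}$ consists of sentences. Expand $\Theta$ by a denumerable supply of fresh individual constants and, interleaving Lemma \ref{exmaxnotr} with Henkin witnessing, construct $\Omega\supseteq\Gamma$ that is simultaneously (a) maximal non-trivial with respect to $\varphi$, and (b) \emph{Henkin-complete}, meaning that whenever $\exists x\alpha\in\Omega$ some witness $\alpha(x/c)$ also lies in $\Omega$. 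Because QN4 is tarskian and finitary, a standard interleaving produces such an $\Omega$.

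Next I would build the canonical structure $\mathfrak{A}_\Omega=(\langle A_\Omega,\{N_x\}_{x\in A_\Omega}\rangle,\S_\Omega)$. The domain $S_\Omega$ is the set of closed terms of the expanded language. The algebra $A_\Omega$ is the Lindenbaum--Tarski algebra of the negation-free fragment modulo provable equivalence in $\Omega$; by the usual calculation it is a complete generalized Heyting algebra. For each class $[\alpha]$ I set $N_{[\alpha]}=\{[\sim\beta]:\beta\equiv_\Omega\alpha\}$, enlarging minimally if necessary so that conditions (i)--(iii) of Definition \ref{defestruc} are met. Predicates are interpreted by $P_{\S_\Omega}(t_1,\dots,t_n)=[P(t_1,\dots,t_n)]$, function symbols syntactically, and the canonical valuation is $v_\Omega(x)=x$. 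The key step is the \emph{truth lemma}
\[
||\psi||_{v_\Omega}^{\mathfrak{A}_\Omega}=1 \quad\text{iff}\quad \psi\in\Omega,
\]
proved by induction on the complexity of the sentence $\psi$: the positive and quantifier cases follow from maximality and the Henkin property, and the negation cases from (N9)--(N13) together with the definition of $\{N_x\}$. Applied to $\psi=\varphi$, the truth lemma gives $\mathfrak{A}_\Omega\not\vDash\varphi$ while $\mathfrak{A}_\Omega\vDash\gamma$ for every $\gamma\in\Gamma$, contradicting $\Gamma\vDash\varphi$.

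The main obstacle I anticipate is the truth lemma itself, and specifically the interaction between the paraconsistent strong negation and the quantifiers. Since (N9)--(N13) do not reduce $\sim\forall x\alpha$ or $\sim\exists x\alpha$ to any simpler formula, the family $\{N_x\}$ must be chosen coherently so as to mirror provability of $\sim$-formulas in $\Omega$ while still satisfying conditions (ii) and (iii) of Definition \ref{defestruc}. The Henkin extension must also be arranged not to destroy closure under the strong-negation axioms, and the substitution condition (imposed as an axiom for negated atoms, as noted after Definition \ref{estructura}) must be preserved in the canonical model. Once this bookkeeping is handled, the remainder of the argument follows the standard Lindenbaum--Henkin completeness template.
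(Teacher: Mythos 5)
Your soundness half is essentially the paper's: induction on the length of the derivation, Heyting-algebra identities for (N1)--(N8), the recursive negation clauses for (N9)--(N13), the substitution condition for (A1),(A2), and the adjunction for (MP),(R3),(R4). No issue there.

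The completeness half, however, has two genuine gaps. First, you assert that the Lindenbaum--Tarski quotient is a \emph{complete} generalized Heyting algebra ``by the usual calculation''; it is not complete in general, and completeness is not optional here, since Definition \ref{defestruc} and the notion of $\Theta$-structure require a completed algebra so that the quantifier clauses make sense. The paper's proof handles this by passing to the MacNeille completion $A^\ast$ of the quotient (which preserves the joins and meets that have been shown to exist) and re-reading the canonical valuation in $\mathfrak{A}^\ast$; your outline omits this step entirely. Second, your ``truth lemma'' is stated two-valuedly ($||\psi||_{v_\Omega}=1$ iff $\psi\in\Omega$) and you propose to discharge the quantifier cases by Henkin witnessing. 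But in algebra-valued semantics what must actually be verified is that the canonical assignment $||\alpha||_v=[\alpha]_\Omega$ satisfies the clauses of Definition \ref{estructura}, i.e.\ the algebraic identities $||\exists x\alpha||_v=\bigvee_{a\in S}||\alpha||_{v[x\to a]}$ and dually for $\forall$ --- statements about arbitrary truth values of subformulas, not only about value $1$ (subformulas of $\varphi$ and of members of $\Gamma$ need not be provable or refutable). Henkin witnesses give an instance in $\Omega$ when $\exists x\alpha\in\Omega$, but they do not yield the least-upper-bound property (that $[\alpha(x/t)]\leq[\phi]$ for all terms $t$ forces $[\exists x\alpha]\leq[\phi]$), and once the witnessing constants are in the language the usual ``fresh constant'' generalization is unavailable. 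The paper establishes exactly this lub/glb property proof-theoretically, from (A1),(A2) together with the rules (R3),(R4), in the term model $( \langle A,\{N_x\}\rangle, Ter)$ with the identity valuation, after first passing to universal closures and a maximal theory via Lemma \ref{exmaxnotr}; this is the heart of the argument and your proposal replaces it with machinery that does not deliver it. Your idea of defining $N_{[\alpha]}$ from classes of negated formulas is in the right spirit (the paper is itself terse on this point), but the two gaps above must be repaired before the completeness direction goes through.
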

\begin{proof} 
We are going to consider a fixed structure  $\mathfrak{M}=\langle (A,N),\S\rangle$.  Let $\varphi$ be a formula such that $\Gamma\vdash\varphi$. Then, there exists $\alpha_1,\cdots ,\alpha_n$ a derivation of $\varphi$ from $\Gamma$. If $n=1$ then $\varphi$ is an axiom or $\varphi\in\Gamma$. If $\varphi\in\Gamma$, then it is easy to see that $\Gamma\vDash\varphi$. Besides, to prove the first-order version of each propositional axioms from N4 are valid is a routine, see for instance \cite{FOS1}. Now, for the sake of brevity we shall denote $||\varphi||_v$ instead of $||\varphi||_v^\mathfrak{M}$.

(A1) Suppose that $\varphi$ is $\alpha(t/x)\to\exists x\alpha$. Then, $||\varphi||_v=||\alpha||_{v[x\to||t||_v]}\to ||\exists x\alpha||_v$. It is clear that $||\alpha||_{v[x\to||t||_v]}\leq\underset{a\in S}{\bigvee}||\alpha||_{v[x\to a]}$ and then, $||\alpha||_{v[x\to||t||_v]}\leq||\exists x\alpha||_v$. Therefore $||\alpha(t/x)\to\exists x\alpha||_v=1$ and this holds for every valuation $v$. (A2) is analogous to (A1).

Suppose now that $||\alpha_j||_v=1$ for each $j<n$.

 If there exists $\{j,k_1,\cdots ,k_m\}\subseteq\{1,\cdots ,j-1\}$ such that $\alpha_{k_1},\cdots ,\alpha_{k_m}$ is a derivation of $\alpha\to\beta$. Let us suppose that $\varphi$ is $\exists x\alpha\to\beta$, where $x$ is not free in $\beta$, and it is obtained by applying $(\exists - In)$. From induction hypothesis $||\alpha\to\beta||_v=1$ for every valuation $v$. Now, consider  $||\exists x\alpha\to\beta||_v=||\exists x\alpha||_v\to||\beta||_v=\underset{a\in S}{\bigvee}||\alpha||_{v[x\to a]}\to||\beta||_v$. On the other hand, since $||\alpha\to\beta||_v=||\alpha||_v\to||\beta||_v=1$, then we have that $||\alpha||_v\leq||\beta||_v$ for each valuation $v$. Hence, $||\alpha||_{v[x\to a]}\leq||\beta||_{v[x\to a]} = ||\beta||_{v}$ for every $a\in S$ because $x$ is free in $\beta$. So, $\underset{a\in S}{\bigvee}||\alpha||_{v[x\to a]}\to||\beta||_{v}=||\exists x\alpha\to\beta||_v=||\varphi||_{v}=1$. The rest of the proof is left to the reader.

Conversely, let us suppose $\Gamma\vDash\varphi$ and $\Gamma\not\vdash\varphi$. Then, from the definition of $\vDash$, (A2) and $(\forall-In)$, we have $\forall\Gamma\vDash\forall\varphi$ and $\forall\Gamma\not\vdash\forall\varphi$ ($\ast$). From the latter and  Lindenbaum-\L os lemma, there exists $\Omega$ maximal  consistent theory  such that $\forall\Gamma\subseteq\Omega$ and $\Omega\not\vdash\varphi$. Let's consider the quotient algebra $A:=\mathfrak{Fm}_\Theta/\Omega$ where  $[\alpha]_\Gamma=\{ \beta \in \mathfrak{Fm}_\Theta: \Omega \vdash\alpha \to \beta, \Omega\vdash\beta \to \alpha\}$ is the class of $\alpha$ by $\Omega$. So, it is not hard to see $1 =[\beta]_\Gamma=\Gamma$ for every $\beta\in\Gamma$ (i.e. $\Gamma\vdash\beta$). It is clear that $A$ is a generalized Heyting algebra, and the a canonical projection $q:\mathfrak{Fm} \to A$ such that $q(\alpha)=[\alpha]_\Omega$  is a homomorphism such that $q^{-1}(\{1\})=\Omega$. Let us consider the $\Theta$-structure $\mathfrak{A}=( \langle A,\{N_x\}_{x\in A}\rangle, Ter)$ and let  $v:Var \to Ter$ be the identity function. So, we can consider $||.||_v: \mathfrak{Fm} \to \langle A, \{N_x\}_{x\in A}\rangle$ defined by $||\alpha||_v= [\alpha]_\Gamma$. Now, we have to  prove  $||\forall x\alpha||_v=\underset{a\in Ter}{\bigwedge} ||\alpha||_{v[x\to a]}$ and $||\exists x\alpha||_v=\underset{a\in Ter}{\bigvee}||\alpha||_{v[x\to a]}$. 
Indeed,  for any term $t$ we denote $\hat{t}$ the new constant. Now, from (A1) we have $\vdash \psi (x/\hat{t}) \to \exists x\psi$ for every term $t$ free for $x$ in $\psi$.  So, $\Gamma \vdash  \psi (x/\hat{t}) \to \exists x\psi$. Thus, $|| \psi (x/\hat{t}) \to \exists x\psi||_v= || \psi (x/\hat{t})||_v \to ||\exists x\psi||_v=|| \psi ||_{v[x\to ||\hat{t}||_v]} \to ||\exists x\psi||_v = || \psi ||_{v[x\to t]} \to ||\exists x\psi||_v= 1$ for every $t\in Ter$. Now, let us suppose there is sentence $\phi$ such that $|| \psi (x/\hat{t})||_v  \leq ||\phi||_v$ for every term in the some before condition; that is to say, $||\phi||_v$ is a upper bound of the set $\{|| \psi (x/\hat{t})||_v\}$ and $x$ is free in $\phi$. Thus,  $|| \psi (x/\hat{t}) \to \phi||_v=1$. and therefore, $\Gamma \vdash \psi (x/\hat{t}) \to \phi$ for every $t$ in the same condition.

In particular for $\hat{x}$, we have $||\alpha(x)||_{v}\leq   ||\beta ||_v$ where $v(x)= \hat{x}$. Therefore, $\Gamma \vdash \psi (x) \to \phi$. So, from (R3), we can infer that $\Gamma \vdash \exists x \psi (x)\to \phi$ and then, $|| \exists x \psi ||_v \leq ||\phi||_v$. Therefore, $||\exists x\alpha||_v=\underset{a\in Ter}{\bigvee}||\alpha||_{v[x\to a]}$. The rest of proof is completely analogous, but now by using (A2) and (R4). Therefore, $||\cdot||_v$ is a valuation  such that $[\psi]_\Omega=1$ iff $\Omega\vdash \psi$. Now, consider the complete lattice $A^\ast$ by MacNeille completion of $A$, see \cite{Birk}. Thus, consider the $\Theta$-structure $\mathfrak{A}^\ast=( \langle A^\ast,\{N_x\}_{x\in A^\ast}\rangle, Ter)$.  Now, since $\forall\Gamma$ is a set of sentences then $||\gamma||_v= ||\gamma||_\mu$ for every valuation $\mu$ and each $\gamma\in \forall\Gamma$. Therefore, by definition $\vDash$, we obtain that $\mathfrak{A}^\ast\vDash\gamma$  for each $\gamma\in\forall\Gamma$ but $\mathfrak{A}^\ast\not\vDash\forall\varphi$ which contradicts the statement ($\ast$).
 
 \end{proof}
\subsection{Paraconsistent Nelson's Set Theory}

The basic system of paraconsistent set theory here is called ZF-N4 and consists of first order  version QN4 of N4 over the first-order signature ${\Theta}_\omega$ which contains an equality predicate\, $\approx$ \, and a binary predicate $\in$. The system ZF-N4 is the first order theory with equality obtained from the logic  QN4 over $\Theta_\omega$  by adding the following set-theoretic axiom schemas: (Extensionality), (Pairing), (Colletion), (Powerset), (Separation), (Empty set), (Union), (Infinity) and (Induction), see Section \ref{section2}. 

Now, we construct the class ${\mathbf{V}}^{\langle A, N\rangle}$  of  N4-structure-valued model over $\langle A,N\rangle$ following Section \ref{section3}. By ${\cal L}_\in$, we denote the first-order language of set theory which consists of the propositional connectives $\{\to, \wedge, \vee, \neg\}$ of the N4 and two binary predicates $\in$ and $\approx$. We can expand this language by adding all the elements of ${\mathbf{V}}^{\langle A, N\rangle}$; the expanded language we will denote ${\cal L}_{\langle A, N\rangle}$. Now, we are going to define a valuation by induction on the complexity of a closed formula in ${\cal L}_{\langle A, N\rangle}$. Then, for a given complete N4-structure $\langle A, N\rangle$,  the mapping $||\cdot||:{\cal L}_{\langle A, N\rangle}\to \langle A, N\rangle $ is defined as follows:

\begin{center}

$||u\in v || =\bigvee\limits_{x\in dom(v)} (v(x) \wedge ||x \approx u ||)  $ \\ [3mm]

$||u\approx v || =\bigwedge\limits_{x\in dom(u)} (u(x)) \to ||x\in v ||) \wedge \bigwedge\limits_{x\in dom(v)} (v(x) \to ||x\in u ||)$\\ [3mm]

$||\varphi \# \psi||  = ||\varphi||  \tilde{\#} ||\psi|| $, for every $\#\in \{\wedge,\vee, \to\}$,\\[3mm]

$||\neg \varphi||_v^\mathfrak{A}\in N_{||\varphi||_v^\mathfrak{A}}$ and $||\neg\neg\varphi||_v^\mathfrak{A}=||\alpha||_v^\mathfrak{A}$,\\ [2mm]

$||\neg (\varphi\vee \psi)||_v^\mathfrak{A}=  ||\neg \varphi||_v^\mathfrak{A} \wedge ||\neg \psi||_v^\mathfrak{A}$ and $||\neg(\varphi\wedge \psi)||_v^\mathfrak{A}=||\neg \varphi||_v^\mathfrak{A}\vee ||\neg \psi||_v^\mathfrak{A} $,\\ [2mm]

$||\neg(\varphi\to \psi)||_v^\mathfrak{A}=||\varphi||_v^\mathfrak{A}\wedge||\neg \psi||_v^\mathfrak{A} $,\\ [2mm] 

$||\exists x\varphi|| = \bigvee\limits_{{u\in {\mathbf{V}}^{\langle A, N\rangle}}} ||\varphi (u)||$ and $||\forall x\varphi|| = \bigwedge\limits_{{u\in {\mathbf{V}}^{\langle A, N\rangle}}} ||\varphi (u)||$. \\ [2mm] 

$||u\approx v || \leq ||\neg\phi(u)||\to ||\neg\phi(v) ||$ for any formula $\phi$\\[2mm]

$||\varphi||$ is called the {\bf truth-value} of the sentence $\varphi$ in the language ${\cal L}_{\langle A, N\rangle}$ in the $C_\omega$-structure-valued model over  $\langle A, N\rangle$.

\end{center}

Now, we say that a sentence $\varphi$ in the language ${\cal L}_{\langle A, N\rangle}$  is said to be valid in ${\mathbf{V}}^{\langle A, N\rangle}$, which  is denoted by ${\mathbf{V}}^{\langle A, N\rangle} \vDash \varphi$, if $||\varphi||=1$. 

For every completed N4-structure $\langle A, N\rangle$, the element $\bigwedge\limits_{x\in A} x$ is the first element of $A$ and so, $A$ is a complete Heyting algebra, we denote by ''$0$'' this element. Besides, for every closed formula $\phi$ of ${\cal L}_{\langle A, N\rangle}$ we have $||\phi||\in A$ and so the proof of the following Lemma can be given ins the exactly same way that was done in Lemmas \ref{lema1}
\begin{lemma}
For a given completed N4-structure $\langle A, N\rangle$. Then,  $|| u \approx u ||=1$,  $u(x)\leq || x\in u||$ for every $x\in dom(u)$, and $||u=v||=||v=u||$, for every $u,v\in {\mathbf{V}}^{\langle A, N\rangle}$
\end{lemma}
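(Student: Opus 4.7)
The plan is to mirror the proof of Lemma \ref{lema1} essentially verbatim, exploiting the fact that all three assertions involve only positive formulas ($\approx$, $\in$, $\wedge$, $\to$) whose truth values lie in the underlying generalized Heyting algebra $A$ of the N4-structure $\langle A, N\rangle$. In particular, the family $\{N_x\}_{x \in A}$ never appears in the relevant clauses, so the lattice arithmetic proceeds exactly as in the intuitionistic case, a point already flagged by the author just before the statement.

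First I would dispose of the symmetry assertion. The defining clause
$$||u \approx v|| = \bigwedge_{x \in dom(u)} (u(x) \to ||x \in v||) \wedge \bigwedge_{x \in dom(v)} (v(x) \to ||x \in u||)$$
is manifestly invariant under exchanging $u$ and $v$, so $||u \approx v|| = ||v \approx u||$ is immediate and requires no induction.

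For the other two claims I would argue by simultaneous transfinite induction on the rank of $u$, using the (meta-)Induction Principle available in $\mathbf{V}^{\langle A, N\rangle}$. The key computation is that from the definition of membership,
$$||x \in u|| = \bigvee_{z \in dom(u)} (u(z) \wedge ||z \approx x||) \geq u(x) \wedge ||x \approx x||.$$
Since every $x \in dom(u)$ has strictly smaller rank than $u$, the inductive hypothesis gives $||x \approx x|| = 1$, so $u(x) \leq ||x \in u||$, i.e., $u(x) \to ||x \in u|| = 1$. Substituting this into the definition yields
$$||u \approx u|| = \bigwedge_{x \in dom(u)} (u(x) \to ||x \in u||) \wedge \bigwedge_{x \in dom(u)} (u(x) \to ||x \in u||) = 1,$$
which closes the induction and simultaneously delivers (ii).

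There is essentially no obstacle to this plan. The only points worth verifying are that the simultaneous recursion on rank is well-founded (guaranteed by the Induction Principle recalled earlier) and that every quantity being manipulated, namely $||u \approx v||$, $||x \in u||$ and $u(x)$, sits inside $A$ so that the Heyting-algebra identities are available. The negation clauses and the sets $N_x$ play no role whatsoever, which is precisely why the argument transfers without change from the setting of Lemma \ref{lema1}.
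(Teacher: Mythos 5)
Your proposal is correct and is essentially the argument the paper intends: the paper gives no explicit proof here, only the remark that everything involved is a positive formula with value in the complete (generalized) Heyting algebra $A$, so the standard intuitionistic/Boolean-valued induction on rank applies, and that is exactly what you carry out (symmetry directly from the symmetric defining clause, reflexivity and $u(x)\leq ||x\in u||$ by simultaneous induction using $||x\in u||\geq u(x)\wedge ||x\approx x||$). The only minor quibble is that the written proof of Lemma \ref{lema1} actually relies on the there-stipulated clauses $||u\approx u||=1$ and Leibniz's law rather than on induction, so your argument really mirrors the $C_\omega$/intuitionistic lemma of Section \ref{section3}; the mathematical content is unaffected.
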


The following fact can be checked by induction on the structure of formulas.

\begin{lemma}\label{LL} For any complete N4-structure the following Leiniz law: $||u\approx v || \leq ||\phi(u)\to \phi(v) ||$ for any formula $\phi$.
\end{lemma}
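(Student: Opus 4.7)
The plan is induction on the structure of $\phi$. For the base case, i.e.\ atomic $\phi$ of the form $x\in w$, $w\in x$, $x\approx w$, or $w\approx x$, I would argue exactly as in the Heyting-valued case: combining the recursive definitions of $||\cdot\in\cdot||$ and $||\cdot\approx\cdot||$ with the residuation property (P1) and a simultaneous induction on the ranks of the names appearing, one obtains the three basic inequalities $||u\approx v||\wedge ||u\in w||\leq ||v\in w||$, $||u\approx v||\wedge ||w\in u||\leq ||w\in v||$, and $||u\approx v||\wedge ||u\approx w||\leq ||v\approx w||$. Rearranging each of these by (P1) gives the Leibniz inequality for the atomic cases.

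For the inductive step on the positive connectives $\wedge,\vee,\to$ and the quantifiers $\forall,\exists$, the argument is routine: since these connectives are interpreted by the operations of the underlying Heyting algebra $A$, monotonicity of $\wedge,\vee$ together with the residuation/monotonicity properties of $\to$ immediately lift the inductive hypotheses for $\phi$ and $\psi$ to the composite formula; the quantifier cases use that the supremum and infimum in $A$ preserve the pointwise inductive inequality. These steps mirror the proof of Leibniz's law in Heyting-valued models of intuitionistic set theory and so I would not dwell on them.

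The delicate case is negation, since when $\alpha$ is atomic the value $||\neg\alpha||$ is only constrained to lie in $N_{||\alpha||}$ and is therefore \emph{not} determined by the algebraic structure of $A$. My plan is to split on the shape of $\alpha$: if $\alpha$ has the form $\beta\vee\gamma$, $\beta\wedge\gamma$, $\beta\to\gamma$, or $\neg\beta$, the defining equations $||\neg(\beta\vee\gamma)||=||\neg\beta||\wedge||\neg\gamma||$, $||\neg(\beta\wedge\gamma)||=||\neg\beta||\vee||\neg\gamma||$, $||\neg(\beta\to\gamma)||=||\beta||\wedge||\neg\gamma||$, and $||\neg\neg\beta||=||\beta||$ reduce $||\neg\alpha||$ to an algebraic combination of $||\cdot||$-values of \emph{strictly simpler} subformulas (possibly themselves still negated), so the inductive hypothesis combined with the already-proved positive cases handles these subcases. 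For atomic $\alpha$, however, no such reduction is available, and the Leibniz inequality for $\neg\alpha$ is exactly the extra substitution-style clause $||u\approx v||\leq ||\neg\phi(u)||\to ||\neg\phi(v)||$ imposed in the definition of the valuation. This imposed clause is what I expect to be the main conceptual obstacle of the enterprise, as it must be shown compatible with the remaining clauses of the valuation, exactly in the same spirit as the treatment of $C_\omega$ in \cite{FOS2}; once it is in place, the induction closes.
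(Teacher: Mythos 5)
Your proposal is correct and follows essentially the same route as the paper: the paper disposes of the lemma by remarking that it ``can be checked by induction on the structure of formulas'', relying---exactly as you do---on the positive connectives and quantifiers being Heyting operations and on the Leibniz inequality for negated formulas being supplied by the clause $||u\approx v||\leq ||\neg\phi(u)||\to||\neg\phi(v)||$ that is imposed as part of the definition of the valuation. The only (harmless) difference is that the paper imposes that clause for arbitrary $\phi$, so its negation case is immediate, whereas you invoke it only for negated atoms and recover compound negations through the Nelson-style clauses for $\neg(\beta\wedge\gamma)$, $\neg(\beta\vee\gamma)$, $\neg(\beta\to\gamma)$ and $\neg\neg\beta$.
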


From the Lemmas \ref{BQ} and \ref{LL}, we have proven the following central result:

\begin{lemma}
Let $\langle A, N\rangle$ be a  complete Leibniz N4-structure, for every formula $\varphi(x)$ and every $u\in \mathbf{V}^{\langle A, N\rangle}$ we have 
 $$|| \exists x\in u \varphi(x)||= \bigvee\limits_{x\in dom(u)} (u(x) \wedge || \varphi(x)||),$$ 
 $$|| \forall x\in u \varphi(x)||= \bigwedge\limits_{x\in dom(u)} (u(x) \to || \varphi(x)||).$$
\end{lemma}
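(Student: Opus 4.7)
The plan is to adapt the proof of Lemma \ref{BQ} (the Heyting-valued version from Section~\ref{section4}) almost verbatim, with Lemma \ref{LL} now supplying the Leibniz inequality that previously had to be imposed as a separate requirement. The enabling observation is that for every closed formula $\varphi$ of $\mathcal{L}_{\langle A,N\rangle}$ the truth value $||\varphi||$ lies in the complete Heyting algebra $A$ underlying the N4-structure, so the adjunction-based properties (P1) and (P3) are available unchanged, even though the ambient logic is N4 rather than intuitionistic.

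For the existential clause, I would first unfold the definition to
\[
||\exists x\in u\,\varphi(x)|| \;=\; \bigvee_{v\in\mathbf{V}^{\langle A,N\rangle}}\bigl(||v\in u||\wedge ||\varphi(v)||\bigr),
\]
then expand $||v\in u||=\bigvee_{x\in dom(u)}(u(x)\wedge ||x\approx v||)$ and distribute. The nontrivial direction ($\leq$) reduces to $||x\approx v||\wedge ||\varphi(v)||\leq ||\varphi(x)||$, which is Lemma \ref{LL} applied to $\phi(\cdot):=\varphi(\cdot)$ together with the symmetry $||x\approx v||=||v\approx x||$ established in the preceding lemma. The reverse inequality ($\geq$) is obtained by specializing $v:=x$ for each $x\in dom(u)$ and using $u(x)\leq ||x\in u||$ combined with $||x\approx x||=1$.

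For the universal clause, unfold analogously to
\[
||\forall x\in u\,\varphi(x)|| \;=\; \bigwedge_{v\in\mathbf{V}^{\langle A,N\rangle}}\bigl(||v\in u||\to ||\varphi(v)||\bigr).
\]
For the direction $\geq$, I would fix $v$ and show that the candidate right-hand side meets $||v\in u||$ is dominated by $||\varphi(v)||$: after distributing the join in $||v\in u||$, each summand $(u(x)\to ||\varphi(x)||)\wedge u(x)\wedge ||x\approx v||$ is bounded by $||\varphi(x)||\wedge ||x\approx v||\leq ||\varphi(v)||$, again by Leibniz, and then (P1) converts the meet-inequality into the desired implication. For $\leq$, restrict the big meet on the left to $v\in dom(u)$ and apply $u(v)\leq ||v\in u||$ together with (P3), i.e.\ the antitonicity of $z\mapsto z\to ||\varphi(v)||$, to conclude that $||v\in u||\to ||\varphi(v)||\leq u(v)\to ||\varphi(v)||$.

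The main obstacle is really Lemma \ref{LL} itself, not the present lemma: the whole argument hinges on Leibniz's law holding for \emph{every} formula, including those built with the N4 strong negation, which is precisely what is needed to identify $||x\approx v||\wedge ||\varphi(v)||$ with a quantity bounded by $||\varphi(x)||$. Once that ingredient is granted, no specifically N4-flavoured manipulation enters, because the negation clauses only affect how $||\neg\cdot||$ is placed within $N_{||\cdot||}$, not the Heyting structure governing $\wedge$, $\vee$, and $\to$; the proof then transcribes the Heyting-valued calculation.
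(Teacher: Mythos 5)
Your proposal is correct and follows essentially the same route as the paper: the paper obtains this lemma by combining the Heyting-valued computation of Lemma \ref{BQ} with the Leibniz law of Lemma \ref{LL}, which is exactly the calculation you transcribe (including the use of symmetry of $\approx$, the residuation (P1), antitonicity (P3), and $u(x)\leq ||x\in u||$). Your closing remark that the real burden lies in Lemma \ref{LL}, since all truth values live in the underlying complete Heyting algebra regardless of the non-deterministic negation, matches the paper's own justification.
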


Taking into account the content of section \ref{section4}, we have proven the following Theorem. 

\begin{theo} Let $\langle A, N\rangle$ be a complete N4-structure. Then, the  set-theoretic axioms  (Pairing), (Colletion), (Separation), (Empty set), (Union), (Infinity) and (Induction) are valid in ${\mathbf{V}}^{\langle A, N\rangle} \vDash \varphi$.
\end{theo}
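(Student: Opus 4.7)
The plan is to mirror the axiom-by-axiom validation carried out in Section \ref{section4}, exploiting the fact that in any complete N4-structure $\langle A,N\rangle$ the reduct $A$ is a complete Heyting algebra and every truth value $||\varphi||$ lands in $A$. The two enabling ingredients are already available: the bounded-quantifier identities of Lemma \ref{BQ} hold here by the same calculation (their derivation uses only Heyting operations on $A$), and Leibniz's law (Lemma \ref{LL}) has been established for all formulas by induction on structure, which is precisely what permits free substitution of equals in subsequent steps.

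For Pairing, Union, Separation, Empty set, and Collection I would transcribe the constructions from Section \ref{section4}: in each case one exhibits a concrete witness $w\in \mathbf{V}^{\langle A,N\rangle}$ by prescribing $dom(w)$ and the values $w(x)\in A$ (e.g. $w=\{\langle u,1\rangle,\langle v,1\rangle\}$ for Pairing and $w(x)=||x\in u||\wedge||\phi(x)||$ for Separation), and the required identity of truth values is a calculation with suprema, infima, and residuation inside the Heyting algebra $A$. The side formula $\phi$ appearing in Separation and Collection intervenes only through its truth value $||\phi||\in A$, so its possibly containing $\neg$ is immaterial to those arguments. For Infinity I would invoke Lemma \ref{tecnico}(iv) applied to the underlying Heyting algebra $A$: the formula $\emptyset\in x\wedge\forall y\in x(y^+\in x)$ is restricted and negation-free, and since it is true of $\omega$ in $\mathbf{V}$, one obtains $||\psi(\hat{\omega})||=1$ and hence $||\exists x\,\psi(x)||=1$.

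The one case that genuinely requires both Leibniz's law and the non-deterministic negation to be treated with care is Induction, where the predicate $\phi$ is arbitrary. Setting $a=||\forall x[(\forall y\in x\,\phi(y))\to\phi(x)]||\in A$, I would prove $a\leq ||\phi(u)||$ for every $u\in \mathbf{V}^{\langle A,N\rangle}$ by meta-induction on the rank of $u$. Assuming the inequality for all $y$ of smaller rank, and hence for every $y\in dom(u)$, property (P1) of the Heyting algebra yields $a\leq u(y)\to ||\phi(y)||$ for each such $y$, whence $a\leq ||\forall y\in u\,\phi(y)||$ by Lemma \ref{BQ}. Combining this with the instance $a\leq ||(\forall y\in u\,\phi(y))\to\phi(u)||$ extracted from the definition of $a$ gives $a\leq ||\phi(u)||$, closing the induction. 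The subtle point to make explicit, and what I expect to be the main obstacle, is that even when $||\phi(u)||$ is determined through the non-deterministic clause $||\neg\alpha||\in N_{||\alpha||}$, the value obtained is still a definite element of $A$, so all Heyting-algebra manipulations remain legitimate; this is the precise technical payoff of working inside a Leibniz N4-structure rather than with raw non-deterministic negations, and the only place where the full strength of Lemma \ref{LL} is genuinely exploited.
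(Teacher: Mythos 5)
Your overall route coincides with the paper's: the theorem is proved there simply by pointing back to the axiom-by-axiom computations of Section \ref{section4}, which use only the complete Heyting reduct $A$, Lemma \ref{lema1}, the bounded-quantifier Lemma \ref{BQ} and Leibniz's law (Lemma \ref{LL}); your treatment of Pairing, Union, Separation, Collection and Infinity is exactly that, including the appeal to Lemma \ref{tecnico}(iv) for Infinity. Your explicit argument for (Induction) is actually more than the paper supplies, since that axiom is never validated anywhere in the text, and the argument is sound: with $a=||\forall x[(\forall y\in x\,\phi(y))\to\phi(x)]||$, the meta-induction on rank together with (P1) and Lemma \ref{BQ} gives $a\leq||\phi(u)||$ for every $u$, hence the axiom gets value $1$. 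One small correction of emphasis: Leibniz's law is not exploited only there; it already enters Separation and Collection through Lemma \ref{BQ}, whose proof uses $||v\approx x\,\wedge\,\varphi(v)||\leq||\varphi(x)||$ for an arbitrary $\varphi$ that may contain $\neg$.

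The genuine gap is (Empty set). There is nothing to transcribe from Section \ref{section4}: that section validates only Pairing, Union, Separation, Infinity and Collection, and its negation is the Heyting pseudocomplement $||\neg\varphi||=||\varphi||^{\ast}$, whereas the axiom $\exists x\forall z[z\in x\leftrightarrow\neg(z\approx z)]$ involves the non-deterministic negation of the N4-valued model. Since $||\neg(z\approx z)||$ is merely some element of $N_{||z\approx z||}=N_1$, not necessarily the bottom element, the obvious witness (the empty name, or any name with values forced to $0$) need not make the biconditional equal to $1$, so a purely positive Heyting computation cannot settle this case. The paper treats Empty set separately, in the theorem that follows, precisely by using the clause $||\neg(u\approx u)||\in N_1$ and a name $w$ whose values are $||\neg(u\approx u)||$, so that $||u\in w||=||\neg(u\approx u)||$. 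Your proposal needs an argument of that kind for Empty set; as written, that item does not go through.
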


Now, we are in condition of proving the axioms (Extensionality) and (Powerset). Indeed,

\begin{theo} Let $\langle A, N\rangle$ be a complete N4-structure. Then, the  set-theoretic axioms  (Extensionality), (Powerset) and (Empty set) are valid in ${\mathbf{V}}^{\langle A, N\rangle} \vDash \varphi$.
\end{theo}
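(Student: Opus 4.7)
The plan is to handle each of the three axioms in turn, leaning heavily on the preceding bounded-quantifier lemma, the Leibniz law established in Lemma \ref{LL}, and the basic inequality $u(x)\leq \|x\in u\|$ from the analogue of Lemma \ref{lema1}(ii). Throughout I fix an arbitrary complete N4-structure $\langle A, N\rangle$ and work inside $\mathbf{V}^{\langle A, N\rangle}$.

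For \textbf{Extensionality}, I would reduce to showing that for every $u,v\in\mathbf{V}^{\langle A, N\rangle}$,
\[
\|\forall z(z\in u\leftrightarrow z\in v)\|\ \leq\ \|u\approx v\|.
\]
Expanding $\|u\approx v\|$ by its defining conjunction over $\mathrm{dom}(u)$ and $\mathrm{dom}(v)$, it suffices to bound each conjunct $u(x)\to\|x\in v\|$ for $x\in\mathrm{dom}(u)$ (and symmetrically). Specialising the infimum $\bigwedge_{w}(\|w\in u\|\leftrightarrow\|w\in v\|)$ at $w=x$ gives $\|x\in u\|\to\|x\in v\|$, and composing with the inequality $u(x)\leq\|x\in u\|$ yields $u(x)\to\|x\in v\|$. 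Taking the meet over $\mathrm{dom}(u)\cup\mathrm{dom}(v)$ delivers the required inequality, and applying the universal-closure definition then gives truth value $1$.

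For \textbf{Powerset}, I would imitate the Heyting/Boolean-valued construction: given $u\in\mathbf{V}^{\langle A, N\rangle}$, take $w$ with $\mathrm{dom}(w)=A^{\mathrm{dom}(u)}$ (all functions $f:\mathrm{dom}(u)\to A$, viewed as names in $\mathbf{V}^{\langle A, N\rangle}$) and $w(f)=1$. Then compute, for arbitrary $z\in\mathbf{V}^{\langle A, N\rangle}$,
\[
\|z\in w\|\ =\ \bigvee_{f\in\mathrm{dom}(w)}\|f\approx z\|,
\]
and prove this equals $\|\forall y\in z\,(y\in u)\|$. One inclusion is immediate: if $f\in\mathrm{dom}(w)$, then $\|f\approx z\|$ forces $z$ to be (pointwise) a subset-name of $u$ via the Leibniz law. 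For the reverse, given $z$ I would introduce the canonical ``best approximation'' $f_z\in A^{\mathrm{dom}(u)}$ defined by $f_z(x)=u(x)\wedge\|x\in z\|$; then Lemma \ref{BQ} combined with the extensionality-style comparison produces $\|\forall y\in z\,(y\in u)\|\leq \|f_z\approx z\|$, finishing the argument.

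For \textbf{Empty set}, I would take $e\in\mathbf{V}^{\langle A, N\rangle}$ to be the empty function, so $\mathrm{dom}(e)=\emptyset$ and $\|z\in e\|=0$ for every $z$. Since $\|z\approx z\|=1$ by the previous lemma, the clause for negation of atomic formulas gives $\|\neg(z\approx z)\|\in N_{1}$; to identify both sides of the biconditional with $0$, I need $0\in N_{1}$ (the analogue of the Leibniz condition already required in the $C_\omega$ case). Under that hypothesis on $\langle A,N\rangle$ (which I would record explicitly and verify is compatible with the Leibniz requirement), we obtain $\|\neg(z\approx z)\|=0=\|z\in e\|$, and hence $\|\forall z(z\in e\leftrightarrow\neg(z\approx z))\|=1$.

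The main obstacle I expect is the \textbf{Powerset} step: the positive-formula part is routine from the intuitionistic case, but one must confirm that the equality $\|z\in w\|=\|\forall y\in z(y\in u)\|$ does not secretly invoke behaviour of $\neg$ outside the controlled clauses of Definition \ref{estructura}. Because Powerset as stated is negation-free, the Leibniz law for positive formulas suffices, and the argument should transfer cleanly; the Empty set axiom is the only one that genuinely requires a structural assumption on the family $\{N_x\}_{x\in A}$, which must be stated as an added hypothesis on the N4-structure.
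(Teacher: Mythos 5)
Your treatment of Extensionality coincides with the paper's proof, but the Powerset step contains a genuine error. With $w(f)=1$ for every $f\in A^{\mathrm{dom}(u)}$, the inclusion you call immediate, $||z\in w||\leq ||\forall y\in z\,(y\in u)||$, fails: Leibniz's law only yields $||f\approx z||\wedge ||\forall y\in f\,(y\in u)||\leq ||\forall y\in z\,(y\in u)||$, and $||\forall y\in f\,(y\in u)||$ need not be $1$ for an arbitrary $f:\mathrm{dom}(u)\to A$. Concretely, let $\mathrm{dom}(u)=\{e\}$ with $u(e)=a<1$ and let $f$ be the constant function $1$; then $||f\in w||\geq w(f)\wedge ||f\approx f||=1$, while $||\forall y\in f\,(y\in u)||=1\to ||e\in u||=a<1$, so the biconditional instance at $z=f$ has value $<1$. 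The paper repairs exactly this point by weighting the name: it takes $\mathrm{dom}(w)=\{f:\mathrm{dom}(u)\to A\}$ but $w(x)=||\forall y\in x\,(y\in u)||$, so the forward inclusion becomes a direct application of Leibniz's law; for the converse it uses the function $a$ with $\mathrm{dom}(a)=\mathrm{dom}(u)$ and $a(z)=||z\in u||\wedge ||z\in v||$ (essentially your $f_z$), gets $||\forall y\in v\,(y\in u)||\leq ||v=a||$ and $||\forall y\in a\,(y\in u)||=1$, hence $||\forall y\in v\,(y\in u)||\leq w(a)\wedge ||v=a||\leq ||v\in w||$. So your reverse inclusion is essentially right, but the definition of $w$ must be changed before the argument stands.

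For Empty set you also diverge from the theorem as stated: you prove it only under the added hypothesis $0\in N_1$ (and, since $||\neg\varphi||$ is merely required to lie in $N_{||\varphi||}$, you would in addition have to fix the non-deterministic choice $||\neg(z\approx z)||=0$, which $0\in N_1$ alone does not give you). The paper needs no such assumption: it observes that $||u\approx u||=1$, so $||\neg(u\approx u)||$ is some element of $N_1$, and instead of the empty name it takes a witness $w$ whose values are exactly $||\neg(u\approx u)||$, so that $||u\in w||=||\neg(u\approx u)||$ and the biconditional receives value $1$ for every complete N4-structure. Thus your proposal, even after fixing Powerset, would establish a weaker result than the theorem claims unless you adopt the paper's witness construction for Empty set.
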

\begin{proof} Given $x,y\in \mathbf{V}^{\langle A, N\rangle}$, then

\begin{eqnarray*}
  ||\forall z (z\in x \leftrightarrow z\in y)|| &= & ||\forall z ((z\in x \to z\in y) \wedge (z\in y \to z\in x) ||\\
&= & \bigwedge\limits_{z\in \mathbf{V}^{\langle A, N\rangle}} (||z\in x|| \to ||z\in y||)\wedge \bigwedge\limits_{z\in \mathbf{V}^{\langle A, N\rangle}} (||z\in y|| \to ||z\in x||) \\
&\leq  & \bigwedge\limits_{z\in dom(x)} (||z\in x|| \to ||z\in y||)\wedge \bigwedge\limits_{z\in dom(y)} (||z\in y|| \to ||z\in x||) \\
&\leq & \bigwedge\limits_{z\in dom(x)} ( x(z) \to ||z\in y||)\wedge \bigwedge\limits_{z\in dom(y)} ( y(z) \to ||z\in x||)  \\
 &= & ||x=y|| \\
\end{eqnarray*}

Assume $u\in \mathbf{V}^{\langle A, N\rangle}$ and suppose $w$ a function such that $dom(w)=\{f:dom(u)\to A: f\, \hbox{function}\}$ and $w(x)=||\forall y\in x(y\in u)||$. Therefore, 

$$|| v\in w||=\bigvee\limits_{x\in dom(w)} (||\forall y\in x(y\in u)|| \wedge || x=v||)\leq ||\forall y\in v(y\in u)|| .$$. Thus, axiom  Extensionality is valid.

On the other hand,  given $v\in \mathbf{V}^{\langle A, N\rangle}$ and consider the function $a$ such that $dom(a)=dom(u)$ and $a(z)=||z\in u|| \wedge ||z\in v ||$. So, it is clear that $a(z)\to ||z\in v ||=1$ for every $z\in dom(a)$, therefore

\begin{eqnarray*}
  ||\forall y\in v(y\in u)||  &= & \bigwedge\limits_{y\in dom(v)} (v(y)\to  || y\in u||)\\
&= &  \bigwedge\limits_{y\in dom(v)} (v(y)\to  (|| y\in u|| \wedge v(y))) \\
&\leq  & \bigwedge\limits_{y\in dom(v)} (v(y)\to  a(y)) \\
&\leq & \bigwedge\limits_{y\in dom(v)} ( v(y) \to ||y\in a||)\wedge \bigwedge\limits_{z\in dom(a)} ( a(z) \to ||z\in v||)  \\
 &= & ||v=a|| \\
\end{eqnarray*}
Since $a(y)\leq ||y\in u||$ for every $y\in dom(a)$ then we have $||\forall y\in a(y\in u)||=1$. Now by construction we have that $a\in dom(w)$ and so,  $ ||\forall y\in v(y\in u)||\leq ||\forall y\in a(y\in u)|| \wedge ||v=a||= w(a) \wedge ||v=a||\leq ||v\in w||$. Therefore, the axiom (Powerset) holds.

Now, we show that (Empty set) is valid. Indeed, first let us note that $||u=u||=1$ for all $u\in \mathbf{V}^{\bf A}$ and then, $||\neg(u=u)||\in N_1$. Therefore, let us consider a function $w\in \mathbf{V}^{\bf A }$ such that  $u\in dom(w)$ and $ran(w)\subseteq \{||\neg(u=u)||\}$, then it is clear that $||u\in w|| = \bigvee\limits_{x\in dom(w)} (w(x)\wedge ||u=x||)=||\neg(u=u)||$  which completes the proof.

\end{proof}

It is worth mentioning that for proving the (Extensionality) and (Powerset) axioms we only need the definition of valuation for atomic formulas formed with the predicates $\in$ and $\approx$. For non-classical Set Theories this expression of the valuations permits to prove the Leibniz law, but if one treat with a different negation; that is to say, a negation that is not a positive formula, this law is not valid, then it is almost impossible to have more different algebraic models for the law. What show that the non-determism is inherent for Paraconsistent Set Theories. On the other hand, is it interesting or practical to have a logical system that does not verify the law? We do not think so. What means to have {\em identical object} that they have no the same properties?  The answer is in the Meta-Matematics, where the models to live, and it is there where the indentical object have the same properties. This show us that to understanding what the logical systems can express we need to have ''right'' models. 

\subsection*{Acknowledgments}
The author acknowledges the support of a post-doctoral grant 2016/21928-0 from S\~ao Paulo Research Foundation (FAPESP), Brazil.

\end{document}